\documentclass[10pt]{elsarticle}

\usepackage{bm}
\usepackage[utf8]{inputenc}
\usepackage{mathrsfs}
\usepackage{xcolor}
\usepackage{graphicx}
\usepackage{subcaption}
\journal{Applied and Computational Harmonic Analysis}
\usepackage{epstopdf}
\usepackage{relsize}
\usepackage{amsmath}
\usepackage[dvips]{epsfig}
\usepackage{amsthm,amssymb}
\usepackage{wrapfig}
\usepackage{bigints}
\usepackage{setspace}
\usepackage{algorithm}
\usepackage{algpseudocode}

\usepackage{scalerel,stackengine}
\usepackage{amsfonts}
\graphicspath{{./Figures/}}
\usepackage{comment}
\usepackage{stmaryrd}
\usepackage{tikz}
\usepackage{tkz-tab}

\usepackage[textsize=tiny]{todonotes}

\newcommand{\R}{\mathbb{R}}
\newcommand{\G}{\mathcal{G}}

\newcommand{\C}{\mathbb{C}}
\newcommand{\F}{\mathcal{F}}

\newcommand{\Ecal}{\mathcal{E}}

\newcommand{\abs}[1]{\left\lvert #1 \right\rvert}

\DeclareMathOperator*{\argmin}{arg\,min}

\newtheorem{proposition}{Proposition}[section]
\newtheorem{definition}{Definition}[section]
\newtheorem{remark}{Remark}[section]
\newtheorem{theorem}{Theorem}[section]
\newtheorem{lemma}{Lemma}[section]

\begin{document}

\begin{frontmatter}

\title{Zeros of the Spectrogram of White Noise\\with Hermite Windows}

\author[ljk]{Charaf Bellourak}
\author[ljk]{Sylvain Meignen}
\author[ljk]{Kévin Polisano}
\author[ljk]{Mickaël Nahon}
\affiliation[ljk]{
organization={Jean Kuntzmann Laboratory, University Grenoble Alpes and CNRS 5225, Grenoble 38401},
country={France}
}

\begin{abstract}
    We study the zeros of the spectrogram of complex white noise analyzed with Hermite windows of order $n$, extending the classical Gaussian-window correspondence with Gaussian Analytic Functions to the polyanalytic setting. Using the polyanalytic Bargmann transform, these zeros split into two families according to the sign of the local Jacobian, giving rise to a natural notion of charge, and we show that these zeros are almost surely simple, isolated, and form a translation and rotation-invariant point process. We derive closed-form expressions for the one-point intensity, decomposed by charge, and also for the unsigned, signed, and hybrid two-point intensities. Besides, we prove that unlike the total repulsion observed with a Gaussian window, the Hermite window of order $n$ generates a pair correlation function that
    converges at the origin to a strictly positive constant depending on $n$. Finally, we propose a numerical method for detecting spectrogram zeros via the winding number around candidate local minima.
\end{abstract}

  \begin{keyword}
  Short-time Fourier transform, white noise spectrograms, Hermite polynomials, polyanalytic functions, Kac-Rice formulas, pair-correlations, winding number.
  \end{keyword}

\end{frontmatter}

\section{Introduction}

The time-frequency analysis of white noise has seen significant development in recent years, particularly through the establishment of deep connections between the short-time Fourier transform (STFT) and the theory of Gaussian analytic random functions. This line of research originates in the pioneering empirical observations of Flandrin \cite{flandrin2015time}, who noticed that the zeros of the spectrogram of white Gaussian noise, computed with a Gaussian window, are distributed according to a regular pattern and proposed a filtering method based on the Delaunay triangulation of these zeros. Then a rigorous mathematical framework was proposed independently by \cite{bardenet2020zeros} and \cite{abreu2018filtering}, who both established that when the analysis window used is the unitary Gaussian functions, the spectrogram of complex white noise gives rise to a point process whose zeros coincide with those of a well-known Gaussian Analytic Function (GAF) associated with the Bargmann-Fock space, an object whose statistical structure (rotation invariance, zero repulsion, pair correlation function) has been extensively studied in the probabilistic literature, notably by Hough \cite{hough2009zeros}. This identification provided a rigorous explanation for Flandrin's empirical observations and a solid probabilistic foundation for characterizing the zeros distribution.

Bardenet and Hardy \cite{bardenet2021time} later developed a systematic correspondence between pairs (time-frequency transform, GAF) and generating functions of classical orthogonal polynomials, also extending this unifying framework to the wavelet transform and its associated hyperbolic GAF. In parallel, \cite{abreu2018filtering} extended the zero-based filtering technique to the continuous wavelet transform and provided statistical tools like pair correlation estimators for the design of such algorithms. 

However, all of theses works restrict their attention to windows leading to genuinely holomorphic random functions. This restriction naturally raises the question of its generalization: what happens when the Gaussian window is replaced by a Hermite window of order $n\geq 1$? Indeed this natural choice has been motivated in particular by the multitaper and reassignment methods discussed by Flandrin \cite{xiao2007multitaper} and by the ConceFT algorithm of Daubechies et al. \cite{daubechies2016conceft} which combines several Hermite windows to sharpen time-frequency representations. Hermite functions have also been studied as windows for Gabor frames and superframes \cite{grochenig2007gabor, grochenig2009gabor}, with recent work focusing on asymptotic frame regions associated with high-order Hermite functions \cite{faulhuber2026asymptotic}. As shown by Abreu and Shirai in a recent work \cite{abreu2026zero}, replacing the Gaussian window by a Hermite window $h_n$ of order $n$ has the effect of replacing classical GAF by a polyanalytic random functions. This work reveals a correlation structure between the zero sets of spectrograms associated with different Hermite orders, providing a rationale for the empirically observed efficiency of multitaper techniques. Nevertheless, this polyanalytic setting has so far received comparatively little attention from the point of view of intrinsic statistics of the zeros of a single spectrogram with $h_n$, as opposed to the cross-correlations between different orders $n$.

The goal of this article is twofold, and complement this recent work by focusing on the single-window case. First, we establish the theoretical framework necessary to study the zeros of the spectrogram obtained with a Hermite window of order $n$, relying on the Bargmann transform and complex Hermite polynomials. In particular, we show that these zeros, unlike in the classical holomorphic case treated in these earlier works, split into two families according to the sign of the Jacobian of the associated polyanalytic function, giving rise to a notion of charge of the zeros, either positive or negative \cite{haimi2022zeros}. Second, we develop the statistical tools needed to characterize this new point process: we compute the signed and unsigned pair correlation functions, and we derive the asymptotic behavior of the pair correlation function near the origin. Finally, motivated by practical considerations regarding the numerical detection of zeros in a spectrogram, we propose and analyze a detection method based on computing the winding number around local minima of the spectrogram, and we discuss the optimal choice of the contour radius used for this detection.

The remainder of the article is organized as follows. Section \ref{sec:background} recalls basic notions on the STFT, the Bargmann transform, and polyanalytic Fock spaces. Section \ref{polybargmann-stationari} gives the polyharmonic Bargmann transform of white Gaussian noise and some stationarity properties. Section \ref{sec:zeros} establishes the fundamental properties of the zeros of the white noise spectrogram with Hermite windows, in particular their characterization in terms of charge. Section \ref{sec:intensities} is devoted to the computation of the pair correlation function and the study of its asymptotic behavior. Section \ref{sec:detection} proposes a numerical method for zero detection and discusses the choice of the optimal contour radius. Technical proofs are deferred to the appendix.

\section{Background}\label{sec:background}

\subsection{The Short-Time Fourier Transform}
\label{stft}
Let $f, g \in L^2(\R)$, the evaluation at $(u,v) \in \R^2$ of the \emph{short-time Fourier transform} (STFT) of $f$ with window $g$ reads
\begin{equation}
\label{stft-def}
V_g f(u,v) = \int_\R f(t) \, \overline{g(t-u)} \, e^{-2i\pi v t} \, dt = \langle f, M_v T_u g \rangle,
\end{equation}
with $\langle \cdot, \cdot \rangle$ the inner product in $L^2(\R)$, $M_v f = e^{2i\pi v \cdot} f(\cdot)$, and $T_u f = f(\cdot - u)$. The squared modulus of the STFT, $\left| V_g f(u,v) \right|^2$, is called the \emph{spectrogram}. The right-hand side of (\ref{stft-def}) naturally extends the STFT to tempered distributions \cite{grochenig2001foundations}, which will be relevant later for white noise. For a fixed window $g \in L^2(\R)$,  the STFT maps $L^2(\R)$ to a closed subspace of $L^2(\R^2)$, noted $\mathcal{G}_g$, called the \emph{Gabor space}:
\begin{equation}
\label{gabor-space-def}
\mathcal{G}_g = \left\{ V_g f : f \in L^2(\R) \right\} \subset L^2(\R^2).
\end{equation}
If $\|g\|_{L^2(\R)}=1$, then $V_g$ is an isometry from $L^2(\R)$ onto $\mathcal{G}_g$. 
In what follows we denote by $\langle \cdot, \cdot \rangle_2$ on $L^2(\R^2)$. Hence $\mathcal{G}_g$ is a reproducing kernel Hilbert space with kernel
\begin{equation}
\label{rkhs-kernel}
k(u,v,u',v')=\langle M_vT_u g,\,M_{v'}T_{u'}g\rangle_2.
\end{equation}
In what follows, $g$ denotes $g(x) =  2^{1/4} e^{-\pi x^2}$ unless stated otherwise. Using that function, writing $z=u+iv$ and $w=u'+iv'$, this kernel $k_0(z,w)$ is given by \cite{abreu2010structure}:
\begin{equation}
\label{gabor-fock-kernel}
k_0(z,w)=e^{-i\pi(u'v'-uv)-\frac{\pi}{2}(|z|^2+|w|^2)}\,e^{\pi w\bar z}.
\end{equation} 

\subsection{The Bargmann Transform and Fock Space}
Recall that $\mathcal{L}^2(\C) := L^2(\C,e^{-\pi |z|^2})$ denotes the Hilbert space of all measurable functions $F: \C \rightarrow \C$ equipped with the norm \[
\|F\|_{\mathcal{L}^2(\C)}^2=\int_{\C}|F(z)|^2e^{-\pi |z|^2}\,d\mu(z)<\infty,
\]
where $\mu(z)$ is the Lebesgue measure on $\C$.
For the unitary Gaussian window $g$, the STFT admits the classical Bargmann representation \cite{bargmann1961hilbert}:
\begin{equation}
\label{def:STFT}
V_g f(\bar z) := V_g f(u,-v) = e^{\pi i u v} e^{-\frac{\pi}{2}\left| z \right|^2}  {\cal B}f(z),
\end{equation}
where ${\cal B} f$ is the Bargmann transform defined by: 
\begin{align}
    {\cal B}f(z)=2^{1/4} \int_{\R} f(t)\, e^{-\pi t^2 + 2\pi t z - \frac{\pi}{2}z^2}\, dt.
\end{align}

This identifies the STFT as an entire function of $z$. More precisely, the Bargmann transform maps $L^2(\R)$ isometrically onto the Fock space $\F(\C)$ \cite{abreu2013function}, defined as the Hilbert space of entire functions $F$ of $L^2(\C,e^{-\pi |z|^2})$. In what follows we denote 
$\langle \cdot , \cdot \rangle_{\F}$ the inner product on $\F(\C)$. This space is a reproducing kernel Hilbert space with kernel
\begin{equation}
\label{fock-kernel}
K^0(z,w)=e^{\pi z\bar w}.
\end{equation}

\begin{remark}
Using (\ref{def:STFT}) and (\ref{fock-kernel}), one easily obtains  (\ref{gabor-fock-kernel}).
\end{remark}

\subsection{Gabor space, Hermite functions and polyanalytic functions}

Let us consider the family of Hermite functions $(h_n)_{n \ge 0}$, defined by:
\begin{equation}
\label{normalized-HF}
h_n(t)=(-1)^n\frac{2^{1/4}}{\sqrt{2^n n!}} e^{\pi t^2} \frac{d^n}{dt^n} e^{-2\pi t^2}.
\end{equation}
\begin{definition} The corresponding Gabor space $\G^n$ is:
\begin{equation}
\label{gabor-hermite}
\mathcal{G}^n = \left\{ V_{h_{n}} f : f \in L^2(\R) \right\} \subset L^2(\R^2).
\end{equation}
\end{definition}
The kernel of $\mathcal{G}^n$ is given in \cite{abreu2010structure}:
\begin{equation}
\label{kernel-hermite}
\begin{split}
k_n(z, w) = \frac{1}{n!}\, &e^{i\pi(uv - u'v') + \frac{\pi}{2}(|w|^2 - |z|^2)} 
\left( \frac{d}{dw} \right)^{n} \!\! \left[ e^{\pi \bar{z} w - \pi|w|^2}(w - z)^{n} \right].
\end{split}
\end{equation}

\begin{definition}[Polyanalytic function]
A function $F:\C\to\C$ is called \emph{Polyanalytic} of order n if its $(n+1)$-th derivative with respect to $\bar{z}$ vanishes, that is, 
\begin{eqnarray}
\partial_{\bar{z}}^{n+1} F=0.
\end{eqnarray}
Equivalently, $F$ can be written as:
\begin{eqnarray}
F(z) = \sum_{k=0}^n \bar{z}^k \phi_k(z),
\end{eqnarray}
where $\{ \phi_k \}_{k=0}^n$ are analytic functions.
\end{definition}

\begin{definition}[The true polyanalytic Fock space]
A function $F:\C\to\C$ belongs to the \emph{True Polyanalytic Fock space} $\F^{n}(\C)$ of order $n$ 
if $\|F\|_{\mathcal{L}^2(\C)}^2<\infty$ and there exists an entire function $\phi$ such that:
    \begin{equation}
    \label{polyanalytic-eq-diff}
    F(z)=\left(\frac{\pi^n}{n!}\right) ^\frac{1}{2} \ e^{\pi |z|^2} \left( \frac{d}{dz} \right)^{n} \left(e^{-\pi |z|^2} \phi(z) \right).
    \end{equation}
\end{definition}
\begin{remark}
\begin{enumerate}
    \item $n=0$ corresponds to analytic functions.
    \item Every function of $\F^{n}(\C)$ is polyanalytic of order $n$.
\end{enumerate}
\end{remark}

\begin{definition}
The true polyanalytic Bargmann transform of order $n$ of a function $f\in L^2(\R)$ is defined by:
\begin{equation}
    (\mathcal{B}_{n} f)(z) = \left(\frac{\pi^n}{n!}\right) ^\frac{1}{2} \ e^{\pi |z|^2} \left( \frac{d}{dz} \right)^{n} \left(e^{-\pi |z|^2} \mathcal{B} f\right)(z).
\end{equation}
\end{definition}
Using (\ref{polyanalytic-eq-diff}) and the fact that $\mathcal{B}f$ is an entire function, it is clear that $\mathcal{B}_{n} f$ ($n \geq 1$) is a polyanalytic function. The  
STFT computed with $h_n$ and the polyanalytic Bargmann transform are related through:
\begin{equation}
\label{poly-bargmann}
(\mathcal{B}_{n} f)(z)=e^{-i\pi uv+\frac{\pi}{2}|z|^2} \,(V_{h_n}f)(u,-v),
\end{equation}
which defines a mapping between $L^2(\R)$ and $\F^{n}(\C)$ \cite{abreu2010structure, abreu2010sampling}.

The multiplier operator 
\begin{equation}
\label{multiplier-n}
(\mathcal{E}_nF)(z)=e^{i\pi uv+\frac{\pi}{2}|z|^2}F(z)
\end{equation}
defines an isometric isomorphism  $\mathcal{G}^n \cong \F^{n}(\C)$.
\begin{proposition}
The space $\F^n(\C)$ is a reproducing kernel Hilbert space and its kernel $K^n(z,w)$ is the following:
\begin{equation}
\label{kernel-poly-gabor}
    K^n(z,w) = e^{-i\pi(uv - u'v') + \frac{\pi}{2}(|w|^2 + |z|^2)} k_n(z,w).
\end{equation} 
\end{proposition}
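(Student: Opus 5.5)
The plan is to transport the reproducing kernel structure of the Gabor space $\mathcal{G}^n$ to $\F^n(\C)$ through the multiplier $\mathcal{E}_n$ in (\ref{multiplier-n}). We take as given that $\mathcal{E}_n$ is an isometric isomorphism from $\mathcal{G}^n$ onto $\F^n(\C)$, consistently with (\ref{poly-bargmann}). Two facts are used.

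\textbf{$\mathcal{G}^n$ is a RKHS.} The space $\mathcal{G}^n$ is the image of $L^2(\R)$ under the isometry $V_{h_n}$, since $\|h_n\|=1$. It is therefore a closed subspace of $L^2(\R^2)$. Point evaluations are bounded on it: for $F=V_{h_n}f$,
\[
|F(u,v)|=|\langle f, M_vT_uh_n\rangle|\le \|f\|=\|F\|_2 .
\]
So $\mathcal{G}^n$ is a RKHS, and its kernel $k_n$ is the one given in (\ref{kernel-hermite}), which we take from \cite{abreu2010structure}. In particular $k_n(\cdot,w)\in\mathcal{G}^n$ and
\[
F(w)=\langle F,k_n(\cdot,w)\rangle_2 \quad \text{for all } F\in\mathcal{G}^n .
\]

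\textbf{Transfer of the kernel.} Write $m(z)=e^{i\pi uv+\frac{\pi}{2}|z|^2}$, so that $\mathcal{E}_nF=mF$, and let $G\in\F^n(\C)$. Then $G=mF$ for a unique $F\in\mathcal{G}^n$. Point evaluation on $\F^n(\C)$ is bounded because $|G(w)|=|m(w)|\,|F(w)|\le |m(w)|\,\|G\|$, so $\F^n(\C)$ is a RKHS.

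To identify its kernel, use that $\mathcal{E}_n$ is unitary:
\[
G(w)=m(w)F(w)=m(w)\langle F,k_n(\cdot,w)\rangle_2=\langle \mathcal{E}_nF,\ \overline{m(w)}\,\mathcal{E}_n k_n(\cdot,w)\rangle_{\mathcal{L}^2}.
\]
Hence the kernel of $\F^n(\C)$ at the point $w$ is the function
\[
z\mapsto \overline{m(w)}\,m(z)\,k_n(z,w).
\]
This function lies in $\F^n(\C)$, since $k_n(\cdot,w)\in\mathcal{G}^n$ and $\mathcal{E}_n$ maps $\mathcal{G}^n$ into $\F^n(\C)$. Uniqueness of reproducing kernels then gives
\[
K^n(z,w)=m(z)\overline{m(w)}\,k_n(z,w)=e^{i\pi(uv-u'v')+\frac{\pi}{2}(|z|^2+|w|^2)}\,k_n(z,w).
\]

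\textbf{Main obstacle: the phase.} The $e^{\frac{\pi}{2}(|z|^2+|w|^2)}$ factor agrees with the statement, but this derivation gives the phase $e^{i\pi(uv-u'v')}$, whereas the statement has $e^{-i\pi(uv-u'v')}$. The difference comes from the $(u,-v)$ reflection in (\ref{poly-bargmann}) and from the convention for which argument of the kernel is conjugated.

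To settle it, I would check the $n=0$ case. There (\ref{gabor-fock-kernel}) is written in terms of $(u,-v)$, and the result must reduce to $K^0(z,w)=e^{\pi z\bar w}$ from (\ref{fock-kernel}). I would fix the sign convention, either by consistently replacing $k_n$ with its version evaluated at reflected points, which flips the sign of the phase, or by using the conjugate-symmetric convention. I would then confirm the stated formula by that direct check. This check is also a consistency test of (\ref{kernel-hermite}) at $n=0$.
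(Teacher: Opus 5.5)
Your route is the paper's route: push the reproducing identity of $\mathcal{G}^n$ through the unitary multiplier $\mathcal{E}_n$. As written, however, the proposal does not prove the statement. It ends with the opposite phase and defers the reconciliation to a check you never carry out.

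The cause is not the $(u,-v)$ reflection, which neither \eqref{multiplier-n} nor the paper's proof involves. It is the orientation you chose for the kernel. By \eqref{rkhs-kernel}, $k_n(z,\cdot)=V_{h_n}(M_vT_uh_n)$, and $\langle V_{h_n}f, V_{h_n}(M_vT_uh_n)\rangle_2=\langle f, M_vT_uh_n\rangle=V_{h_n}f(z)$. So the kernel function at $z$ is $w\mapsto k_n(z,w)$, and the reproducing identity is $F(z)=\langle F,k_n(z,\cdot)\rangle_2$, not $F(w)=\langle F,k_n(\cdot,w)\rangle_2$. Formula \eqref{kernel-hermite} agrees: applying $\mathcal{E}_n$ in the variable $w$ turns $k_n(z,\cdot)$ into an expression of the form \eqref{polyanalytic-eq-diff} in $w$.

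Your claim $k_n(\cdot,w)\in\mathcal{G}^n$ is therefore false. By Hermitian symmetry, $k_n(\cdot,w)=\overline{k_n(w,\cdot)}$ is the conjugate of an element of $\mathcal{G}^n$. Consequently your candidate $z\mapsto m(z)\overline{m(w)}k_n(z,w)$ is not in $\F^n(\C)$, and the uniqueness step does not apply. Concretely, at $n=0$ it equals $e^{2i\pi(uv-u'v')}e^{\pi w\bar z}$, which is not entire in $z$.

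With the correct orientation, the same computation gives
\[
G(z)=m(z)\langle F,k_n(z,\cdot)\rangle_2=\bigl\langle \mathcal{E}_nF,\ \overline{m(z)}\,\mathcal{E}_nk_n(z,\cdot)\bigr\rangle_{\F}.
\]
So the kernel function at $z$ is $w\mapsto \overline{m(z)}\,m(w)\,k_n(z,w)=e^{-i\pi(uv-u'v')+\frac{\pi}{2}(|z|^2+|w|^2)}k_n(z,w)$. This is exactly the stated formula, and exactly the paper's proof.

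The $n=0$ check you propose confirms this sign. The phase $e^{-i\pi(uv-u'v')}$ cancels the one in \eqref{gabor-fock-kernel} and leaves $e^{\pi w\bar z}$, i.e.\ \eqref{poly-fock-kernel1} with $L_0=1$. This matches \eqref{fock-kernel} only after swapping the arguments, which is an orientation inconsistency in the paper, not a phase issue. Your bound on point evaluations, which the paper omits, is a worthwhile addition.
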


\begin{proof}
Let $f\in \mathcal{G}^n$ and $\psi = \Ecal_n f \in \F^n(\C) $, the reproducing property gives 
$f(z) = \langle f, k_n(z,\cdot) \rangle_{\mathcal{G}^n}$.
As $\Ecal_n$ is unitary, 
\begin{eqnarray*}
\begin{aligned}    
\langle f, k_n(z, \cdot) \rangle_2 &=& \langle \Ecal_n f, \Ecal_n k_n(z, \cdot) \rangle_{\F}
    =\langle \psi, \Ecal_n k_n(z, \cdot) \rangle_{\F}.
\end{aligned}
\end{eqnarray*}
Then, $ e^{-i\pi uv-\frac{\pi}{2}|z|^2} \psi(z) = \langle \psi,e^{i\pi u'v'+\frac{\pi}{2}|w|^2}  k_n(z, \cdot) \rangle_{\F}$
meaning that $$ \psi(z)=\langle \psi, K_n(z, \cdot) \rangle_{\F_n(\C)}.$$ 
\end{proof}
Combining (\ref{kernel-poly-gabor}) with (\ref{kernel-hermite}), one obtains:
\begin{eqnarray}
\label{poly-fock-kernel1}
\begin{aligned}
K^n(z,w) = \frac{1}{n!}\,e^{\pi|w|^2}\left( \frac{d}{dw} \right)^{n} \! \left[ e^{\pi \bar{z} w - \pi|w|^2}(w - z)^{n} \right]
=L_n(\pi|z-w|^2)\,e^{\pi w\bar z},
\end{aligned}
\end{eqnarray}
where $L_n$ is the Laguerre polynomial of degree $n$.

Finally, we prove the following proposition: 
\begin{proposition} 
\label{prop:Hkn}
The \emph{STFT} of an Hermite function $h_k$ with the window $h_n$ is given by:
\begin{equation}
\label{stft-hermite-hermite}
V_{h_n}h_k(\bar{z}) = e^{i \pi u  v} e^{-\frac{\pi}{2} |z|^2} H_{k,n}(\sqrt \pi \ z, \sqrt \pi \ \bar{z}),
\end{equation}
where $H_{k,n}$ stands for the complex Hermite polynomials defined as \cite{abreu2026zero}:
\begin{equation*}
H_{k,n}(z,\bar z)=
\begin{cases}
\sqrt{\dfrac{n!}{k!}}\, z^{k-n} L_n^{(k-n)}\!\bigl( |z|^2\bigr),  k>n,\\[6pt]
\sqrt{\dfrac{k!}{n!}}\, (-\bar z)^{\,n-k} L_k^{(n-k)}\!\bigl( |z|^2\bigr),  k\le n,
\end{cases}
\end{equation*}
with $L_n^{(\alpha)}$ the generalized Laguerre polynomial.
\end{proposition}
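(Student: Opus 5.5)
Using (\ref{poly-bargmann}) with $f=h_k$, the claim is equivalent to
\[
\mathcal{B}_n h_k(z)=H_{k,n}(\sqrt\pi\, z,\sqrt\pi\,\bar z),
\]
since $V_{h_n}h_k(u,-v)=e^{i\pi uv-\frac{\pi}{2}|z|^2}\mathcal{B}_nh_k(z)$. So we reduce everything to a computation inside the polyanalytic Fock space.

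\textbf{Step 1: the classical Bargmann transform of $h_k$.} With the normalization (\ref{normalized-HF}), it is classical that $\mathcal{B}h_k(z)=\sqrt{\pi^k/k!}\,z^k$, i.e.\ the Hermite functions are mapped onto the orthonormal monomial basis of $\F(\C)$. I would verify this through the generating function $\sum_k \frac{\sqrt{2^k}\,s^k}{\sqrt{k!}}\,\cdot$ of the Hermite functions, by computing $\mathcal{B}$ of a Gaussian $e^{-\pi t^2+2\sqrt{2}\pi s t-\pi s^2}$ as a Gaussian integral and matching powers of $s$. This is routine.

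\textbf{Step 2: apply the differential operator.} By definition,
\[
\mathcal{B}_nh_k(z)=\Big(\frac{\pi^n}{n!}\Big)^{1/2}\sqrt{\frac{\pi^k}{k!}}\;e^{\pi|z|^2}\partial_z^n\big(e^{-\pi|z|^2}z^k\big).
\]
Since $\partial_z e^{-\pi z\bar z}=-\pi\bar z\,e^{-\pi z\bar z}$, Leibniz gives
\[
e^{\pi|z|^2}\partial_z^n\big(e^{-\pi|z|^2}z^k\big)=\sum_{j=0}^{\min(n,k)}\binom{n}{j}\frac{k!}{(k-j)!}\,z^{k-j}(-\pi\bar z)^{n-j}.
\]
Now substitute $\zeta=\sqrt\pi z$, so that $\pi^{(n+k)/2}z^{k-j}\bar z^{n-j}\pi^{n-j}=\pi^{j}\,\zeta^{k-j}\bar\zeta^{n-j}$ with matching scaling. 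The powers of $\pi$ then cancel exactly, leaving a polynomial in $\zeta,\bar\zeta$.

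\textbf{Step 3: identify Laguerre polynomials.} For $k>n$, factor out $\zeta^{k-n}$ and reindex with $m=n-j$. The sum becomes
\[
\zeta^{k-n}\sum_m\binom{n}{m}\frac{k!}{(k-n+m)!}(-|\zeta|^2)^m=\zeta^{k-n}\,n!\,L_n^{(k-n)}(|\zeta|^2),
\]
using $L_n^{(\alpha)}(x)=\sum_m\binom{n+\alpha}{n-m}\frac{(-x)^m}{m!}$. Together with the prefactor $1/\sqrt{n!k!}$, this yields the constant $\sqrt{n!/k!}$. For $k\le n$, factor out $(-\bar\zeta)^{n-k}$ instead and reindex with $m=k-j$, which gives $k!\,L_k^{(n-k)}(|\zeta|^2)$ and the constant $\sqrt{k!/n!}$.

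\textbf{Main obstacle.} The only delicate point is the bookkeeping of constants and signs: the $\pi$ powers, the sign $(-1)^{n-j}$ against the alternating sign in Laguerre's sum, and the conventions $u,-v$ versus $\bar z$ in (\ref{poly-bargmann}). I would check the final identity against the case $n=0$, which reproduces Step 1, and against $k=0$, which should give $(-\bar\zeta)^n/\sqrt{n!}$, to fix any sign convention.
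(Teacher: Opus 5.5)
The structure of your argument is sound, but the step where you say the powers of $\pi$ ``cancel exactly'' fails as written, and it hides a real inconsistency. With the prefactor $(\pi^n/n!)^{1/2}$ you took from the definition of $\mathcal{B}_n$, the $j$-th term of your Leibniz sum carries $\pi^{\frac{n}{2}+\frac{k}{2}+(n-j)}z^{k-j}\bar z^{\,n-j}=\pi^{n}\zeta^{k-j}\bar\zeta^{\,n-j}$, not $\pi^{j}\zeta^{k-j}\bar\zeta^{\,n-j}$. Your computation therefore yields $\mathcal{B}_nh_k=\pi^nH_{k,n}(\sqrt{\pi} z,\sqrt{\pi}\bar z)$, which contradicts the statement through (\ref{poly-bargmann}).

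The cause is that the printed normalization of $\mathcal{B}_n$ is incompatible with (\ref{poly-bargmann}). That identity, together with $\|V_{h_n}f\|_{L^2(\R^2)}=\|f\|_2$, forces $\|\mathcal{B}_nf\|_{\mathcal{L}^2(\C)}=\|f\|_2$. For $n=1$, $f=h_0$, one has $e^{\pi|z|^2}\partial_z e^{-\pi|z|^2}=-\pi\bar z$ and $\int_{\C}\pi^2|z|^2e^{-\pi|z|^2}\,d\mu(z)=\pi$, so the prefactor must be $\pi^{-1/2}$, and in general $(\pi^n n!)^{-1/2}$. Independently, a direct Gaussian integral gives $V_{h_1}h_0(u,-v)=-\sqrt{\pi}\,\bar z\,e^{i\pi uv}e^{-\frac{\pi}{2}|z|^2}$. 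Your planned $k=0$ check would only catch this if $V_{h_n}h_0$ is computed independently; comparing with $H_{0,n}$ is circular.

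With the prefactor $(\pi^n n!)^{-1/2}$, the exponent of $\pi$ is $-\frac{n}{2}+\frac{k}{2}+(n-j)-\frac{n+k}{2}+j=0$. The rest of Steps 2--3 is then correct as it stands: the Leibniz expansion, the reindexings $m=n-j$ and $m=k-j$, the identity $\binom{n}{m}\frac{k!}{(k-n+m)!}=\frac{n!}{m!}\binom{k}{n-m}$ used for $k>n$, the signs, and the constants $\sqrt{n!/k!}$ and $\sqrt{k!/n!}$.

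Once repaired, your proof takes a genuinely different route from the paper's. The paper never uses $\mathcal{B}_n$ or (\ref{poly-bargmann}). It writes $V_{h_n}h_k(u,-v)=\langle \mathcal{B}M_vT_uh_n,\mathcal{B}h_k\rangle_{\F}$ by unitarity of $\mathcal{B}$. It then uses the covariance of $\mathcal{B}$ under time--frequency shifts to get $\mathcal{B}M_vT_uh_n\propto e^{\pi z z_0}(z-\bar z_0)^n$, and extracts the coefficient of $z^k$ from the Cauchy product of a binomial and an exponential series. The resulting finite sums and the Laguerre identification coincide with yours.

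What each buys: the paper's argument needs only the classical Bargmann transform, so it is insensitive to how $\mathcal{B}_n$ is normalized, which is precisely where your version needed care. Yours is shorter and purely algebraic (a finite Leibniz expansion, no series), and it identifies $\mathcal{B}_nh_k$ explicitly. However, it uses (\ref{poly-bargmann}) as a black box, and that identity, with the correct constant, already carries the analytic content of the proposition; the statement is then just its specialization to $f=h_k$.
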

The proof is available in Appendix \ref{AppendixA}.

\section{Polyharmonic Bargmann transform of white Gaussian noise and stationarity properties}
\label{polybargmann-stationari}
\subsection{Polyharmonic Bargmann transform of white Gaussian noise}
A \emph{complex Gaussian white noise} (CGWN) 
can be viewed as a random tempered distribution, denoted by  ${\cal S}'(\R)$,  and its Hermite coefficients are i.i.d. standard complex Gaussian random variables \cite{abreu2026zero}. More precisely, since $(h_k)_{k\ge 0}$  is an orthonormal basis of $L^2(\mathbb{R})$, then the decomposition of white noise into this basis can be written in ${\cal S}'(\R)$ as
\begin{equation}
\label{white-noise-infty}
    \xi= \sum_{k=0}^{\infty} \xi_k h_k,
\end{equation}
where the random variables $\xi_k:=\langle \xi,h_k\rangle$ are i.i.d.\ with law ${\mathcal{C} \mathcal{N}}(0,1)$. As the convergence takes 
place in ${\cal S}'(\R)$, we may write: 
\begin{eqnarray}
\label{def:STFT_hermite}
\begin{aligned}
V_{h_n}\xi\left(\frac{\bar{z}}{\sqrt \pi}\right) &= \langle \xi, M_{-v} T_u h_n \rangle = \sum_{k=0}^{\infty} \xi_k V_{h_n} h_k\left(\frac{\bar{z}}{\sqrt \pi}\right)\\
&= e^{i u v} e^{-\frac{|z|^2}{2} } \sum_{k=0}^{\infty} \xi_k H_{k,n}( z,\ \bar{z})
:= e^{i  u v} e^{-\frac{|z|^2}{2}} f_n(z,\bar z).
\end{aligned}
\end{eqnarray}
So using (\ref{poly-bargmann}), we have $({\cal B}_n\xi)(\frac{z}{\sqrt \pi}) = f_n(z,\bar z)$. 
One also has the following relation \cite{abreu2026zero}:
\begin{equation}
\label{Landau_level}
f_{n+1}(z,\bar z) = \frac{1}{\sqrt{n+1}} 
(\partial_z f_n(z,\bar z) - \bar z f_n(z,\bar z)).
\end{equation}

For the sake of simplicity, from now on we will  put 
$f_n(z):= f_n(z,\bar z)$.
\subsection{Stationarity with respect to translation}
\label{sec:stab_trans}
Consider the kernel associated with ${\cal B}_n(z+a)$, we immediately get: 
\begin{eqnarray*}
K^n(z+a,w+a) &=& L_n(\pi|z-w|^2)\,e^{\pi (w+a)\overline{z+a}}= K^n(z,w) \,e^{\pi (a \bar z + w \bar a + |a|^2)}.
\end{eqnarray*}
Defining $h_n(z) := {\cal B}_n (z) e^{\pi (z \bar a+ \frac{|a|^2}{2})}$, 
one gets: 
\begin{eqnarray*}
\mathbb{E}[h_n(w) \overline{h_n(z)}] = K^n(z,w) \,e^{\pi (a \bar z + w \bar a + |a|^2)}.
\end{eqnarray*}
Thus we have 
\begin{eqnarray}
\label{def:stab_translat}
{\cal B}_n (z+a) := 
 e^{\pi (z \bar a+ \frac{|a|^2}{2})} {\cal B}_n (z).
 \end{eqnarray}
The prefactor of the right-hand side is nonzero, thus the zeros of ${\cal B}_n(z)$ are invariant with respect to translation.
\subsection{Stationarity with respect to rotation}
\label{sec:stab_rot}
 Remembering  that 
 $$
 f_n(z,\bar z) = \sum_{k=0}^n \binom{n}{k} f_0^{(k)}(z) (- \bar z)^{n-k}.
 $$
 Thus, 
 \begin{eqnarray*}
 f_n(e^{i\theta} z, e^{-i\theta}\bar z) = 
 \sum_{k=0}^n \binom{n}{k} f_0^{(k)}(e^{i\theta}z) (- e^{-i\theta} \bar z)^{n-k}\\
 = \sum_{k=0}^n \binom{n}{k} \sum_{p \geq k} \xi_p \frac{\sqrt{p!}}{(p-k)!} z^{p-k} e^{i\theta(p-k)} (- e^{-i\theta} \bar z)^{n-k}\\
 = \sum_{k=0}^n \binom{n}{k} \sum_{p \geq k} \xi_p e^{i\theta(p-n)}\frac{\sqrt{p!}}{(p-k)!} z^{p-k} (\bar z)^{n-k},
 \end{eqnarray*}
 therefore $f_n(e^{i\theta} z) \stackrel{d}{=} f_n(z)$. 
\section{Zeros of White Noise with Hermite Functions}\label{sec:zeros}

\subsection{Zeros of White Noise as a Point Process}

The polyanalytic function $f_n$ has the same zeros as 
$V_{h_n} \xi$, thus we study the zeros of the former function. We already know that when $n=0$, the zeros 
are isolated \cite{bardenet2021time}. 
Then using \eqref{Landau_level}, one can write that if $n= 0$ the zeros $z_0$ of $f_1$ 
correspond to:
\begin{eqnarray*}
\partial_z f_0(z_0) - \bar z_0 f_0(z_0) = 0.
\end{eqnarray*}
Now assume that $f_0(z_0) \neq 0$, then $f_0$ is holomorphic at $z_0$ and we may 
write: 
\begin{eqnarray*}
\bar z_0 := A(z_0) =  \frac{\partial_z f_0(z_0)}{f_0(z_0)},
\end{eqnarray*}
with $A$ holomorphic at $z_0$ at thus defining $\overline A$, the conjugate of $A$, we may write $z_0 = \overline{A} (\bar z_0) = \overline{A} \circ A(z_0)$. Then, as $\overline{A} \circ A$ is holomorphic at $z_0$, then $z_0$ is a fixed point of an holomorphic function and is isolated
provided  $\overline{A} \circ A$ is not the identity, which is true with probability $1$. 
Then if $f_0(z_0) = 0$ then $\partial_z f_0(z_0) = 0$ and thus $z_0$ is a second order zero of an holomorphic function and is therefore isolated. 

In the general case, $n \geq 1$, let us consider $f_n(z,\bar z) = u(z,\bar z) + i v(z,\bar z)$, 
then the zeros correspond to $f_n(z_0,\bar z_0) = 0$
and, omitting $(z_0,\bar z_0)$ for the sake of simplicity:
\begin{eqnarray}
\label{def:det}
\det f_n'(z_0) = \left |\begin{array}{c c} \partial_x u & \partial_y u\\
     \partial_x v & \partial_y v
     \end{array} \right | = |\partial_z f_n|^2 -  |\partial_{\bar z} f_n|^2 = 0.
\end{eqnarray}
So viewing $f_n(z,\bar z)$ as a bidimensional function, it is clear that when \eqref{def:det} is 
satisfied, there may exist a curve of zeros around $z = z_0$, and $z_0$ is called a degenerate zero. 
On the contrary, when \eqref{def:det} is not satisfied, the zeros are isolated.

The Bulinskaya-type lemma implies that, for the random field $f_n$, 
degenerate zeros occur with probability zero, therefore the zeros are almost surely isolated and simple even for $n\ge 1$. For the sake of consistency we recall \cite{abreu2026zero, armentano2025general}:

\begin{lemma}[Bulinskaya-type lemma]
\label{lem:point_process}
Let $U \subset \mathbb{R}^2$ be open, and let
$F : U \to \mathbb{R}^2$
be a random field with almost surely $C^1$ sample paths. 
Fix $v \in \mathbb{R}^2$. Assume that for every relatively compact open set $O \subset U$, there exists a neighborhood $V$ of $v$ and a constant $C_O$ such that for every $y \in V$,
\begin{equation}
\int_O p_{F(x)}(y)\,dx \le C_O,
\end{equation}
where $p_{F(x)}$ denotes the density of $F(x)$ on $\R^2$.
Then
\begin{equation}
\mathbb{P}\Bigl(\exists\, x \in O : F(x)=v \text{ and } \det F'(x)=0\Bigr)=0.
\end{equation}
In other words, with probability one, the level $v$ is not a critical value of $F$.
\end{lemma}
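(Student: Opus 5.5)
Since the level $v$ and the open set $O$ are fixed, we cover $\overline{O}$ by small squares and estimate, on each square, the probability that it contains a point $x$ with $F(x)=v$ and $\det F'(x)=0$. The conclusion follows if the sum of these probabilities tends to zero as the mesh goes to zero. The argument is the classical Bulinskaya one: the density bound keeps the expected number of squares that hit the level $v$ finite, while degeneracy of $F'$ means $F$ maps a square to a set much smaller than its diameter.

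\textbf{Step 1 (localization).} Fix $\varepsilon>0$. Since $F$ is a.s.\ $C^1$ and $\overline O\subset U$ is compact, there is $M$ with $\mathbb P(\sup_{\overline O}\|F'\|>M)<\varepsilon$. Let $\omega(\delta)=\sup\{\|F'(x)-F'(x')\|: x,x'\in\overline O,\ |x-x'|\le\delta\}$ be the modulus of continuity of $F'$ on $\overline O$. It satisfies $\omega(\delta)\to0$ a.s.\ as $\delta\to0$. Fix $\eta>0$; for $\delta$ small we then have $\mathbb P(\omega(\sqrt2\delta)>\eta)<\varepsilon$.

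\textbf{Step 2 (geometry of a degenerate square).} Partition $O$ into squares $Q_j$ of side $\delta$; there are about $|O|\delta^{-2}$ of them. Suppose some $x_0\in Q_j$ has $F(x_0)=v$ and $\det F'(x_0)=0$, and suppose also $\|F'\|\le M$ and $\omega(\sqrt2\delta)\le\eta$. Because $F'(x_0)$ has rank at most one, its image lies in a line $L$ through the origin. For $x\in Q_j$ we have $F(x)-v=F'(x_0)(x-x_0)+r(x)$ with $|r(x)|\le\eta\sqrt2\delta$. Hence $F(Q_j)$ lies in the rectangle $v+\{\,a\ell+b\ell^\perp: |a|\le \sqrt2 M\delta,\ |b|\le\sqrt2\eta\delta\,\}$, which has area at most $C M\eta\delta^2$. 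Its $\rho$-neighborhood, with $\rho=\delta^2$ say, has area $\lesssim M\eta\delta^2$ as well.

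\textbf{Step 3 (density bound).} Take the centre $c_j$ of $Q_j$. On the event above, $|F(c_j)-v|\le \sqrt2 M\delta$, and $F(c_j)$ lies in the thin rectangle $R_j(v)$ described in Step 2. The difficulty is that this rectangle is random through its direction $\ell$. To remove it, I would cover the set of possible directions by $N\approx M/\eta$ fixed strips of width $C\eta\delta$ and length $CM\delta$ around $v$. Each fixed strip $S$ has area $\lesssim M\eta\delta^2$.

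Alternatively, and more cleanly, I would use the integrated hypothesis directly. Write
\begin{equation*}
\sum_j \mathbb P\bigl(F(c_j)\in B(v,\sqrt2M\delta)\bigr)\approx \delta^{-2}\int_O\int_{B(v,\sqrt2 M\delta)}p_{F(x)}(y)\,dy\,dx \le \delta^{-2}\,\pi(\sqrt2M\delta)^2 C_O ,
\end{equation*}
which is finite but not small. Gaining the factor $\eta/M$ requires exploiting degeneracy: one replaces the ball by the strip union and averages over the set of $y$ in $V$. This is the main obstacle, because the density bound controls only $F(x)$, not the joint law of $(F,F')$. I would handle it by the standard trick of letting the level vary. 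The bad event for level $y$ forces every $y'$ within distance $\eta\delta$ of $y$ in the strip to be nearly hit as well. Applying Fubini in $y\in V$ to $\int_V\mathbb P(\text{level }y\text{ degenerate in }O)\,dy$ then gives a bound $\lesssim C_O\,\eta\cdot M$. Since this holds for every $\eta$, the integral is zero, so the bad event has probability zero for a.e.\ $y$ near $v$.

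\textbf{Step 4 (from a.e.\ level to $v$).} To pass from almost every $y$ to the specific level $v$, I would rerun the covering argument with the strip centred at the actual value $F(c_j)$. This bounds the bad probability by $\sum_j\mathbb P(F(c_j)\in S_j)+2\varepsilon$, where $S_j$ is a region of area $O(M\eta\delta^2)$ around $v$. The uniform-in-$y\in V$ hypothesis is what gives $\sum_j \mathbb P(F(c_j)\in S_j)\lesssim \delta^{-2}\cdot M\eta\delta^2\cdot C_O/|O|$ up to constants, using a mean-value replacement of the point evaluation at $c_j$ by an average over $Q_j$. Letting $\eta\to0$ and then $\varepsilon\to0$ concludes the proof.
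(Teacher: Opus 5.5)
The paper only quotes this lemma from the literature and gives no proof, so your argument has to stand on its own. It does not: Steps 1--2 are fine, but the decisive estimate in Steps 3--4 is never established.

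\textbf{Where the argument fails.} Your set $S_j$, a thin rectangle of area $O(M\eta\delta^2)$ through $v$, is oriented along $\operatorname{im}F'(x_0)$. It is therefore determined by the sample path, so the bound $\mathbb P(F(c_j)\in S_j)\lesssim \mathrm{area}(S_j)\times\text{density}$ is not available. The hypothesis only controls the probability that $F(x)$ lands in a \emph{deterministic} set. You flagged this yourself as the main obstacle, but neither of your remedies works:
\begin{itemize}
\item Covering the directions by $N\approx M/\eta$ fixed strips and taking a union bound gives total area $N\cdot M\eta\delta^2\approx M^2\delta^2$. That is exactly the ball bound of Step 3, with no gain.
\item ``Letting the level vary'' with Fubini over $y\in V$ yields at best the statement for almost every level. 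That is just Sard's theorem for $C^1$ maps $\mathbb R^2\to\mathbb R^2$ and needs no density assumption at all.
\end{itemize}
Passing from almost every $y$ to the prescribed $v$ is the whole content of the lemma, and Step 4 merely asserts it.

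\textbf{The missing idea.} Exploit the degeneracy on the domain side rather than the image side. Take $O'$ relatively compact with $\overline O\subset O'$, fix $\eta\le M$, and work on the event $\{F(x_0)=v,\ \det F'(x_0)=0\}\cap\{\sup\|F'\|\le M,\ \omega(\delta)\le\eta\}$, with $\delta$ small. Pick a unit vector $k\in\ker F'(x_0)$. Along $k$ the field moves by at most $\eta$ per unit length, and transversally by at most $M$. Hence the rectangle $\{x_0+ak+bk^\perp:\ |a|\le\delta/2,\ |b|\le\eta\delta/(2M)\}$ lies in $\{x\in O':\ |F(x)-v|<2\eta\delta\}$.

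It follows that the occupation measure $\Lambda=\bigl|\{x\in O':\ |F(x)-v|<2\eta\delta\}\bigr|$ is at least $\eta\delta^2/M$, whatever the random orientation of $k$. On the other hand, Fubini and the hypothesis give
$\mathbb E\Lambda=\int_{B(v,2\eta\delta)}\int_{O'}p_{F(x)}(y)\,dx\,dy\le 4\pi C_{O'}\eta^2\delta^2$.
Markov's inequality then yields $\mathbb P(\text{bad event})\le 4\pi C_{O'}M\eta+2\varepsilon$. Let $\eta\to0$ with $M$ fixed, then $\varepsilon\to0$.

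This is exactly why the hypothesis is phrased as a bound on $\int_O p_{F(x)}(y)\,dx$: it controls the expected occupation measure of small balls around $v$. No partition into squares and no control of the random strip direction are needed.
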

A direct application of this lemma consists of considering the random field  
$F_n(z)=\bigl(\Re f_n(z,\bar z),\, \Im f_n(z,\bar z)\bigr) \in \R^2$,
which is almost surely $C^1$ for each noise realization and admits a density $p_{F_n(z)}$ on $\R^2$ satisfying 
\begin{eqnarray*}
\int_O p_{F(z)}(y)\,dz \le \frac{\abs O}{\pi}
\end{eqnarray*}
for every relatively compact set $O\subset \C$ and every $y\in \R^2$ (see \cite[Proposition 4.8]{abreu2026zero}). 

Therefore the assumptions of the lemma are satisfied and, taking $v=0$, one concludes that the zeros of $f_n(z, \bar{z})$ are almost surely simple and isolated, and thus define a point process, which can be studied using statistical tools like \emph{pair correlation function}. 

When $n \geq 1$, there exists three types of 
zeros depending on the sign of the Jacobian of $f_n$
defined by 
\begin{equation}
\label{def:Jac}    
J_{f_n}(z_0) = |\partial_z f_n(z_0)|^2 - |\partial_{\bar{z}} f_n(z_0)|^2.
\end{equation}
If $J_{f_n}(z_0) >0$ (resp. $<0$), we say that the zero is charged positively (resp. negatively). 
When it is null, the zero is said to be degenerate, a situation that happens with a null probability since the zeros are point processes. 
\subsection{On Maxima and Zeros of $|V_{h_0}\xi|^2$ and $|V_{h_1}\xi|^2$}

We investigate the relation between maxima at order $0$ and zeros at 
order $1$. If $z_0$ is a zero of $f_1$, considering $\phi_0$ 
defined as:
\begin{equation}
    \phi_0(z) := |f_0(z)|^2 e^{-|z|^2}.
\end{equation}
A direct computation, using the fact $f_0$ is holomorphic, gives: 
\begin{equation}
\label{partial_barz}
    \partial_{z}\phi_0(z)=f_1(z) \,\overline{f_0(z)} e^{-|z|^2}.
\end{equation}
Therefore, the critical points of $\phi$ are exactly the points where $f_0(z)=0$ or $f_1(z)=0$.
$f_0(z) = 0$ obviously corresponds to minima of $\phi_0$. To determine whether critical points corresponding to $f_1(z)=0$ are maxima or saddle points, we look into the Hessian matrix $H(z)$ of $\phi_0(z)$. In fact, we have the following:
\begin{itemize}
    \item If $\det H(z)>0 \text{ and } \partial_z\partial_{\bar z}\phi_0(z)<0$, then there is  a local maxima at $z$.
    \item If $\det H(z)<0$, then there is a saddle point at $z$.
\end{itemize}
Using (\ref{partial_barz}), we get:
\begin{eqnarray*}
\begin{aligned}
    \partial_z\partial_{\bar z}\phi_0 (z) &= e^{-|z|^2} \Big(|f_1(z)|^2 + f_0(z)\overline{\partial_{\bar z}f_1(z)}\Big)\\
    &= e^{-|z|^2}\Big(|f_1(z)|^2 - |f_0(z)|^2\Big)= - e^{-|z_0|^2} \ |f_0(z_0)|^2 < 0,
\end{aligned}
\end{eqnarray*}
 in which we used the fact that $f_0$ is 
 holomorphic to write $\partial_{\bar z} f_1 = -f_0$. 
For the determinant of the Hessian matrix, one has
\begin{eqnarray}
\label{det_hessian}
\det H(z)=4\Big[(\partial_z\partial_{\bar z}\phi_0(z))^2-|\partial_z^2\phi_0(z)|^2\Big].
\end{eqnarray}
We then remark that 
\begin{equation*}
\label{partial2_phi}
    \partial^2_z\phi_0 (z) = e^{-|z|^2} \ \overline{f_0(z)} \ \big(\partial_z f_1(z)-\bar z\,f_1(z)\big).
\end{equation*}
Now consider $z_0$ such that $f_1(z_0)=0$ and $f_0(z_0) \neq 0$, one obtains
\begin{eqnarray*}
\partial_z^2\phi_0(z_0)=e^{-|z_0|^2}\,\overline{f_0(z_0)}\,\partial_z f_1(z_0),
\end{eqnarray*}
and thus \eqref{det_hessian} becomes
\begin{eqnarray}
\begin{aligned}
\det H(z_0) =& 4e^{-2|z_0|^2}|f_0(z_0)|^2\Big(|f_0(z_0)|^2-|\partial_z f_1(z_0)|^2\Big)\\
=& 4e^{-2|z_0|^2}|f_0(z_0)|^2\Big( |\partial_{\bar z} f_1(z_0)|^2 - |\partial_{z} f_1(z_0)|^2\Big)\\
=&-4e^{-2|z_0|^2}|f_0(z_0)|^2 \ J_{f_1}(z_0),
\end{aligned}
\end{eqnarray}
and thus the local maxima (resp. saddle points) of $|V_{h_0}\xi|^2$ 
are the zeros of $f_1$ charged negatively (resp. positively) as illustrated in Fig. \ref{Fig1}. 
Unfortunately, there is no extension to $n \geq 1$, 
because $f_n$ is no longer holomorphic.

\begin{figure*}
\begin{center}
\begin{tabular}{cc}
\includegraphics[scale = 0.45]{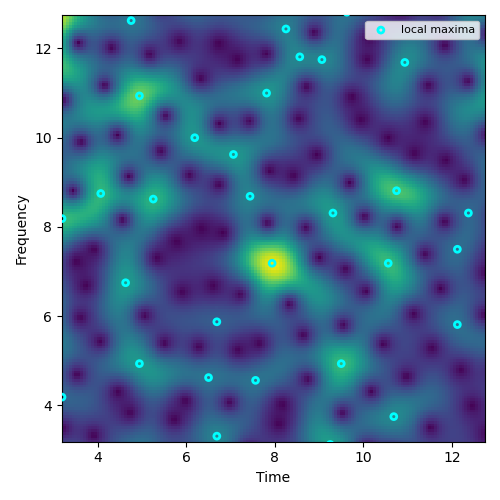}&\includegraphics[scale = 0.45]{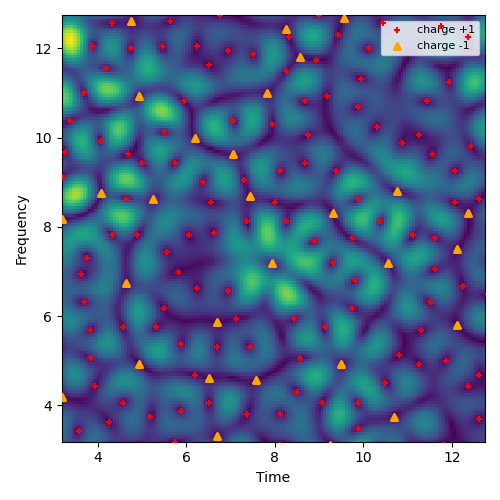}\\
(a)&(b)
\end{tabular}
\end{center}
\caption{(a):  local maxima of $|V_{h_0}\xi|^2$. (b): positive and negative zeros of  $|V_{h_1} \xi|^2$ }
\label{Fig1}
\end{figure*}

\section{Computation of the one and two points intensities}\label{sec:intensities}

Using Kac-Rice Formula, the $k$-point intensity of $f_n$ is defined as follows:
\begin{eqnarray}
\label{def:k_intensity}
\begin{aligned}
\rho_{n}^{[k]}(a_1,a_2,\ldots,a_k) 
 =\\
\mathbb{E}[|J_{f_n(a_1)}\ldots J_{f_n(a_k)}|  
\mid f_n(a_1)=0, \ldots, f_n(a_k)=0] p_{f_n(a_1),\ldots,f_n(a_k)} (0,\ldots,0).
\end{aligned}
\end{eqnarray}
As $f_n$ is stationary, we had rather study its simpler form, setting $a_1$ to zero: 
\begin{eqnarray}
\begin{aligned}
\rho_n^{[k]}(0,a_2,\ldots,a_k) 
 =\\
\mathbb{E}[|J_{f_n(0)}\ldots J_{f_n(a_k)}|  
\mid f_n(0)=0, \ldots, f_n(a_k)=0] 
p_{f_n(0),\ldots,f_n(a_k)} (0,\ldots,0).
\end{aligned}
\end{eqnarray}
So if $k=1$,
we obtain the 1-point intensity which is 
constant, and multiplied by $\pi$ we get the average 
number of zero on circles of radius 1 on $|V_{h_n} \xi|^2$. If $k=2$, and multiplying by $\pi$ (see normalization in \eqref{def:STFT_hermite}), 
$\rho_n^{[2]}$ is related to the probability of  
having 2 zeros of $|V_{h_n} \xi|^2$ in a discus of radius $r$ in the complex plane. 
\subsection{Computation of the First Intensity of Hermite Zeros}
Though a proof of the calculation of the 1-point intensity already exists 
in \cite[Theorem 1.8]{haimi2022zeros}. We propose here a much simpler version, based on a different approach. 
To compute the 1-point intensity we recall that 
\begin{eqnarray*}
f_0(z) &=& \sum_{k} \xi_k \frac{z^k}{\sqrt{k!}},\\
f_n(z) &=& \frac{1}{\sqrt{n !}} 
(\partial_z - \bar z)^n f_0(z)= \frac{1}{\sqrt{n!}} \sum_{k=0}^n \binom{n}{k} f_0^{(k)}(z) (-\bar z)^{n-k}.\\
\end{eqnarray*}
Then the Jacobian of $f_n(z)$ reads $J_{f_n}(z) = |\partial_z f_n(z)|^2 -  |\partial_{\bar z} f_n(z)|^2$. 
Because of the stationarity of the process $f_n$ with respect to translation (see Sec. \ref{sec:stab_trans}), one can compute the 1-point intensity using Kac-Rice formula 
at zero. We first remark that: 
\begin{eqnarray*}
J_{f_n}(0)= \left|\frac{f_0^{(n+1)}(0)}{\sqrt{n!}}\right|^2 -\left|n \frac{f_0^{(n-1)}(0)}{\sqrt{ n!}}\right|^2=(n+1) |\xi_{n+1}|^2-n |\xi_{n-1}|^2,\\
\end{eqnarray*}
and then that 
\begin{eqnarray*}
p_{f_n(z)}(y) = \frac{1}{\pi
K^n\left(\frac{z}{\sqrt \pi}, \frac{z}{\sqrt \pi}\right)}
e^{-\frac{|y|^2}{K^n\left(\frac{z}{\sqrt \pi}, \frac{z}{\sqrt \pi}\right)}},
\end{eqnarray*} 
we obtain using the Kac-Rice formula 
that: 
\begin{eqnarray}
\label{def:Kac-Rice}
\begin{aligned}
\rho_n^{[1]} = {\mathbb E} [|J_{f_n}(0)| \ | f_n(0) = 0] p_{f_n(0)}(0)=\frac{{\mathbb E} [|J_{f_n}(0)| \  | f_n(0) = 0]}{\pi}.
\end{aligned}
\end{eqnarray}
Then, we remark that $f_n(0) = \frac{f_0^{(n)}(0)}{\sqrt{ n!}} = \xi_n$. Thus, $J_{f_n}(0)$ is independent 
from $f_n(0)$. Consequently:
\begin{eqnarray*}
\rho_n^{[1]}= \frac{{\mathbb E} [|J_{f_n}(0)|]}{\pi}= \frac{{\mathbb E} [ |(n+1) |\xi_{n+1}|^2-n |\xi_{n-1}|^2|]}{\pi}\\
= \frac{1}{\pi} \left \{  (n+1) {\mathbb E} [|\xi_{n+1}|^2]+ n {\mathbb E}[|\xi_{n-1}|^2] - 2 {\mathbb E} [\min ((n+1)|\xi_{n+1}|^2,n |\xi_{n-1}|^2)] \right \}. 
\end{eqnarray*}
Using the fact that, for any $n$, $\xi_{n}$ is ${\cal C \cal N} (0,1)$, its real and imaginary parts are 
${\cal N} (0,\frac{1}{2})$, and thus $2 |\xi_{n}|^2$ is $\chi_2$ distributed with 2 degrees of freedom and 
thus $ {\mathbb E}[|\xi_{n}|^2] = {\mathbb E}[|\xi_{n+1}|^2]=1$. 
Then, using properties on the expectation of the minimum of weighted $\chi_2$ laws, we may write: 
\begin{equation*}
\label{def:minchi2}
{\mathbb E} [\min ((n+1)|\xi_{n+1}|^2,n |\xi_{n-1}|^2)] = \frac{(n+1)n}{2n+1}.
\end{equation*}
Putting everything together we obtain: 
\begin{eqnarray}
\label{rho1}
\rho_n^{[1]} &=& \frac{1}{\pi} \left(n+ \frac{1}{2} + \frac{1}{4n+2}\right).
\end{eqnarray}

Let us then derive the 1-point intensities of the positive and negative charges. In what follows, we denote $\rho^{[1],+}_n$ 
(resp. $\rho^{[1],-}_n$), the 1-point intensity of the zeros charged positively (resp. negatively), namely $f_n(z_0) = 0$ with 
$J_{f_n}(z_0) > 0$ (resp. $J_{f_n}(z_0) < 0$). For that we use (\ref{def:Kac-Rice}) but with the signed Jacobian:
\begin{eqnarray}
\label{def:signed_kac}
\begin{aligned}
\rho^{[1],+}_n - \rho^{[1],-}_n = \frac{{\mathbb E} [J_{f_n}(0) \  |  \ f_n(0) = 0]}{\pi n!} = \frac{{\mathbb E} [ (n+1) |\xi_{n+1}|^2-n |\xi_{n-1}|^2]}{\pi}  = \frac{1}{\pi}.\\
\end{aligned}
\end{eqnarray}

Using the fact that: 
\begin{eqnarray*}
\rho^{[1],+}_n + \rho^{[1],-}_n&=& \rho^{[1]}_n,
\end{eqnarray*}
the 1-point intensities of positive and negative charges are given respectively by:
\begin{eqnarray*}
\rho^{[1],+}_n = \frac{1}{2\pi} \left(n+ \frac{3}{2} + \frac{1}{4n+2}\right), \ 
\rho^{[1],-}_n = \frac{1}{2\pi} \left(n- \frac{1}{2} + \frac{1}{4n+2}\right).
\end{eqnarray*}

\subsection{Computation of the 2-point intensity  of Hermite zeros}
\label{sec:second_intensity}
The $2$-points intensity of $f_n$ satisfies 
the following property: 
\begin{eqnarray*}
\rho_n^{[2]}(0,a_2)
 =
\mathbb{E}[|J_{f_n(0)}J_{f_n(a_2)}| 
\mid f_n(0)=f_n(a_2)=0]
p_{f_n(0),f_n(a_2)} (0,0)\\
= 
\mathbb{E}[|J_{f_n(0)}J_{f_n(e^{-i \theta_2}a_2),}| \mid f_n(0)=f_n(e^{-i \theta_2}a_2)=0] 
p_{f_n(0),f_n(e^{-i \theta_2}a_2)} (0,0)\\
=\mathbb{E}[|J_{f_n(0)}J_{f_n(r)}| \mid f_n(0)=f_n(r)=0] 
p_{f_n(0),f_n(r)} (0,0) = \rho_n^{[2]}(0,|a_2|).
\end{eqnarray*}
We therefore put $\rho_n^{[2]}(r)$ for $r \in \mathbb{R}_*^+$ ($r$ can be viewed as $|a_2|$). We first define 
\begin{eqnarray}
\label{def:vector}
\begin{aligned}
X = \left ( 
\begin{array}{c}
f_n(0)\\
f_n(r)\\
\end{array}
\right ), \ 
Y = \left ( 
\begin{array}{c}
\partial_z f_n(0)\\
\partial_{\bar z} f_n(0)\\
\partial_z f_n(r)\\
\partial_{\bar z} f_n(r)
\end{array}
\right ).
\end{aligned}
\end{eqnarray}
Both $X$ and $Y$ are zero mean Gaussian vectors. In that case, 
$Z = (Y \mid X=0)$ follows a Gaussian law with expectation vector and covariance matrix defined by:
\begin{eqnarray}
\label{def:cond_law}
\begin{aligned}
\mathbb{E}[Z]  &= \mathbb{E}[Y] = 0_{{\mathbb R}^4}, \\
\Gamma &= \Sigma_Y - (\Sigma_0)^H (\Sigma_X)^{-1} \Sigma_0,\\
\end{aligned}
\end{eqnarray}
with $\Sigma_Y = \mathrm{Cov}(Y)$, $\Sigma_X = \mathrm{Cov}(X)$, $\Sigma_0 =\mathrm{Cov}(X,Y)$, 
and, $H$ denotes the Hermitian transpose. 
To compute the 2-point intensity we also need:
\begin{eqnarray}
\label{density_rho2}
p_{f_n(0),f_n(r)} (y) = \frac{1}{\pi^2 \det \Sigma_X} 
e^{-y^* (\Sigma_X)^{-1} y}.
\end{eqnarray}
\begin{figure*}
\begin{center}
\begin{tabular}{cc}
\includegraphics[scale = 0.39]{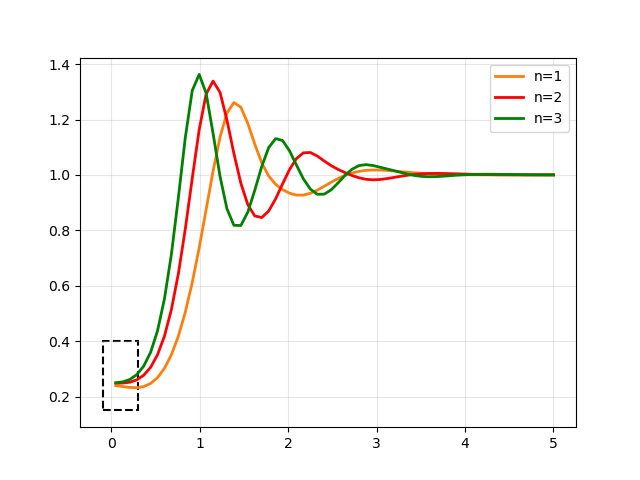}&\includegraphics[scale = 0.39]{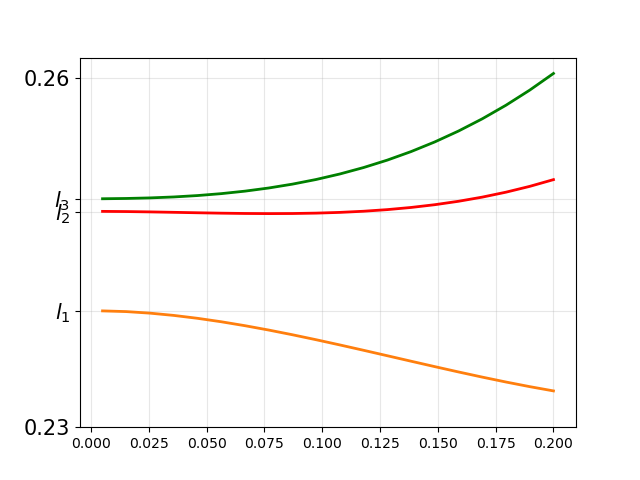}\\
(a)&(b)
\end{tabular}
\end{center}
\caption{(a): $g_n^{[2]} (r)$ for $n=1,2$ and $3$ (b): Zoom of the dashed black square, highlighting $l_1$, $l_2$ and $l_3$.}
\label{Fig2}
\end{figure*}
Then, let $\Tilde{Z}$ be a centered complex Gaussian vector in $\C^d$ with covariance matrix $\Gamma \succeq 0$, and let $A \in {\cal M}_d(\C)$ be Hermitian. Then for every $t \in \R$ \cite{turin1960characteristic},
\begin{equation}
\label{variable_determinant}
    \mathbb E \left[ \exp(it\Tilde{Z}^* A \Tilde{Z}) \right ] = \det(I - i t \Gamma A)^{-1}.
\end{equation}
Let us now apply this result to the centered complex Gaussian vector $Z$ with covariance 
$\Gamma \succeq 0$ and the hermitian matrix $\Lambda$, defined by 
$$\Lambda(s,t)=s A_1+ t A_2,$$
with $A_1 = \mathrm{Diag}(1,-1, 0,0)$ and $A_2 = \mathrm{Diag}(0,0, 1,-1)$, 
and $s,t \in \R$. 
Then we introduce 
\begin{eqnarray*}
    J_0 := Z^* A_1 Z, \
    J_r := Z^* A_2 Z.
\end{eqnarray*}
With these notations we can express the 2-point intensity as
\begin{eqnarray}
\label{def:2points}
\begin{aligned}
\rho_n^{[2]} (r):= \mathbb{E}[|J_{f_n(0)}J_{f_n(r)}| \mid f_n(0)=f_n(r)=0] p_{f_n(0),f_n(r)} (0,0)
= \frac{ \mathbb{E}[|J_0 J_r|] }{\pi^2 \det \Sigma_X}.
\end{aligned}
\end{eqnarray}
Then defining the function $\varphi$ as: 
\begin{eqnarray}
\label{eq:def_phi}
\begin{aligned}
\varphi(s,t) &= \mathbb{E}\ \left[\exp \bigl(i Z^* \Lambda(s,t) Z \bigr) \right]\\
&= \mathbb{E}\ \left[ \exp\bigl( i(sJ_0+tJ_r) \bigr ) \right]
= \det \ \bigl(I-i\ \Gamma \ \Lambda(s,t)\bigr)^{-1},
\end{aligned}
\end{eqnarray}
one has the following proposition:
\begin{proposition}
\label{unsigned_rho2}
\begin{eqnarray}
\label{def:rho2}
\rho_n^{[2]}(r) = \frac{4}{\pi^2}
\int_0^\infty \int_0^\infty
\frac{\psi (s,t)}{s^2t^2}  ds dt  \times \frac{1}{\pi^2 \det \Sigma_X},
\end{eqnarray}
where 
\begin{eqnarray*}
\psi(s,t)= 1-\Re\varphi(s,0)-\Re\varphi(0,t)
+\frac{1}{2}\bigl(\Re\varphi(s,t)+\Re\varphi(s,-t)\bigr),
\end{eqnarray*}
with $\varphi$ defined in \eqref{eq:def_phi}.
\end{proposition}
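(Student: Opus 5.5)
The plan is to start from \eqref{def:2points}, which reduces everything to computing $\mathbb{E}[|J_0 J_r|]$, and to express this absolute moment through the characteristic function $\varphi$ of the pair $(J_0,J_r)$. The basic tool is the elementary identity, valid for every real $x$,
\begin{equation*}
|x| = \frac{2}{\pi}\int_0^\infty \frac{1-\cos(sx)}{s^2}\,ds,
\end{equation*}
which follows from $\int_0^\infty (1-\cos u)u^{-2}\,du = \pi/2$ after the substitution $u=s|x|$. I will apply it twice, once to $x=J_0$ and once to $x=J_r$. Multiplying the two representations gives
\begin{equation*}
|J_0J_r| = \frac{4}{\pi^2}\int_0^\infty\!\!\int_0^\infty \frac{(1-\cos(sJ_0))(1-\cos(tJ_r))}{s^2t^2}\,ds\,dt .
\end{equation*}

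Next I expand the product. Using $\cos a\cos b = \tfrac12(\cos(a+b)+\cos(a-b))$, the numerator becomes
\begin{equation*}
1-\cos(sJ_0)-\cos(tJ_r)+\tfrac12\cos(sJ_0+tJ_r)+\tfrac12\cos(sJ_0-tJ_r).
\end{equation*}
The integrand is nonnegative, so Tonelli's theorem lets me take the expectation inside the double integral. Each resulting term is then recognized through $\mathbb{E}[\cos(sJ_0+tJ_r)] = \Re\,\mathbb{E}[e^{i(sJ_0+tJ_r)}] = \Re\varphi(s,t)$. In particular $\varphi(s,0)$ and $\varphi(0,t)$ are the marginal characteristic functions, and $\varphi(s,-t)$ handles the difference term. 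This reproduces exactly $\psi(s,t)$. The closed form of $\varphi$ as a determinant comes from \eqref{variable_determinant}, applied with $A=\Lambda(s,t)$, which is Hermitian because it is real diagonal. Substituting into \eqref{def:2points} and keeping the factor $1/(\pi^2\det\Sigma_X)$ yields \eqref{def:rho2}.

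The main obstacle is justifying finiteness and the exchange of expectation and integral. Tonelli already handles the exchange by positivity, so the remaining point is that $\mathbb{E}|J_0J_r|<\infty$. This holds because $J_0$ and $J_r$ are quadratic forms in a Gaussian vector, so they have moments of all orders; Cauchy--Schwarz then bounds $\mathbb{E}|J_0J_r|$. The resulting double integral is therefore finite.

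Its behavior near $s,t\to0$ is also controlled. There, $1-\cos(sJ_0)\le s^2J_0^2/2$, so the integrand stays bounded. Alternatively one can check that $\psi(s,t)=O(s^2t^2)$ near the axes. At infinity the $s^{-2}t^{-2}$ decay ensures integrability.

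I also need $\Sigma_X$ to be invertible for $r>0$, so that the conditional law in \eqref{def:cond_law} and the density \eqref{density_rho2} are well defined. This is where I would spend the care. I would argue non-degeneracy from the Laguerre kernel \eqref{poly-fock-kernel1}, by checking that $|K^n(0,r)|<K^n(0,0)$ for $r>0$. For the degeneracy issue, only $r$ off a possibly exceptional set needs to be considered, namely $r$ away from the points where this strict inequality could fail.
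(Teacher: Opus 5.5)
Your argument is the paper's proof: the identity $|x|=\frac{2}{\pi}\int_0^\infty(1-\cos(sx))s^{-2}\,ds$ applied to $J_0$ and $J_r$, the product-to-sum expansion, and $\mathbb{E}[\cos(sJ_0\pm tJ_r)]=\Re\varphi(s,\pm t)$; your Tonelli step, which takes the expectation of the nonnegative product at fixed $(s,t)$ before splitting it into the four terms (these are not separately integrable against $s^{-2}t^{-2}$), supplies a justification the paper leaves implicit. The one thing to correct is your side remark on $\Sigma_X$: comparing $|K^n(0,r)|$ with $K^n(0,0)$ amounts to $|L_n(r^2)|<1$ in the rescaled variables, which fails for large $r$ (e.g.\ $L_1(x)=1-x$) and for every $r$ when $n=0$, whereas invertibility only requires $|\tilde K^n(0,r)|^2<\tilde K^n(0,0)\,\tilde K^n(r,r)$, i.e.\ $\det\Sigma_X=e^{r^2}-L_n(r^2)^2>0$; this holds for every $r>0$ with no exceptional set, because $f_n(0)=\xi_n$ while $f_n(r)$ carries the nonzero coefficient $(-r)^n/\sqrt{n!}$ on $\xi_0$ when $n\ge1$ (and $r$ on $\xi_1$ when $n=0$), so the two are not proportional and Cauchy--Schwarz is strict.
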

The proof is available in Appendix \ref{AppendixB}. The practical calculation of \eqref{def:rho2} is available in Appendix \ref{AppendixC}. The pair correlation function $g_n^{[2]}$ is the normalization of $ \rho_n^{[2]}$ by $\rho_n^{[1]} \times \rho_n^{[1]} $: 
\begin{eqnarray*}
g_n^{[2]}(r)=\frac{\rho_n^{[2]}(r)}{\rho_n^{[1]}\rho_n^{[1]} }.
\end{eqnarray*}

We remark that the functions $g_1^{[2]}$, $g_2^{[2]}$  and $g_3^{[2]}$ 
tend to a nonzero constant when $r$ tends to zero as illustrated in Fig. \ref{Fig2} (a), unlike in the holomorphic case $g_0^{[2]}$. Indeed we have the following proposition:
\begin{proposition}
\label{limit_pair_function}
The pair correlation function $g_n^{[2]}(r)$ converges to a 
nonzero constant when $r\rightarrow 0$, and for $n\ge 0$, one has:
\begin{eqnarray*}
\lim_{r\rightarrow0} g_n^{[2]}(r) = \frac{n(n+1)(n^2+n+1)}{(2n^2+2n+1)^2}.
\end{eqnarray*}
\end{proposition}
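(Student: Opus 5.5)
The plan is to study the Kac--Rice expression for $\rho_n^{[2]}(r)$ in \eqref{def:2points} directly as $r\to0$, using Taylor expansions at the origin, rather than the characteristic-function representation of Proposition \ref{unsigned_rho2}. The key observation is that both the density factor $\det\Sigma_X$ and the conditional moment $\mathbb E[|J_0J_r|]$ vanish like $r^2$. The limit of $g_n^{[2]}$ is therefore the ratio of the two $r^2$-coefficients, divided by $(\rho_n^{[1]})^2$ from \eqref{rho1}. Note that $\rho_n^{[1]}=\frac{2n^2+2n+1}{\pi(2n+1)}$.

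\emph{Step 1: Taylor data at $0$.} From $f_n=\frac{1}{\sqrt{n!}}(\partial_z-\bar z)^n f_0$ and $f_0(z)=\sum_k\xi_k z^k/\sqrt{k!}$, I would express the jet of $f_n$ at $0$ up to order two in terms of the i.i.d.\ variables $\xi_k$:
\begin{itemize}
\item $f_n(0)=\xi_n$;
\item $a:=\partial_z f_n(0)=\sqrt{n+1}\,\xi_{n+1}$ and $b:=\partial_{\bar z}f_n(0)=-\sqrt n\,\xi_{n-1}$;
\item the second derivatives $\partial_z^2 f_n(0)$, $\partial_z\partial_{\bar z}f_n(0)$, $\partial_{\bar z}^2 f_n(0)$ as explicit combinations of $\xi_{n+2},\xi_n,\xi_{n-2}$.
\end{itemize}
Since $r$ is real, only the $x$-direction matters. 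We have $\partial_x f_n(0)=a+b$ and
\[
W:=\partial_x^2 f_n(0)=\partial_z^2f_n(0)+2\partial_z\partial_{\bar z}f_n(0)+\partial_{\bar z}^2f_n(0).
\]

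\emph{Step 2: expansion of $\det\Sigma_X$.} Writing $f_n(r)=f_n(0)+r\,\partial_xf_n(0)+O(r^2)$ gives
\[
\det\Sigma_X= r^2\Bigl(\mathrm{Var}(f_n(0))\,\mathrm{Var}(\partial_xf_n(0))-|\mathrm{Cov}(f_n(0),\partial_xf_n(0))|^2\Bigr)+O(r^3).
\]
Here $f_n(0)=\xi_n$ is independent of $a+b$, so $\det\Sigma_X=(2n+1)r^2+O(r^3)$.

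\emph{Step 3: expansion of the conditional Jacobians.} Write $J=|\partial_zf|^2-|\partial_{\bar z}f|^2=\Re\bigl((\partial_zf+\partial_{\bar z}f)\overline{(\partial_zf-\partial_{\bar z}f)}\bigr)=\Re\bigl(\partial_x f\,\overline{V}\bigr)$, where $V:=\partial_zf-\partial_{\bar z}f=-i\partial_yf$. Conditioning on $f_n(0)=f_n(r)=0$ forces the following to leading order:
\[
\partial_xf_n(0)\approx-\tfrac r2 W,\qquad \partial_xf_n(r)\approx\tfrac r2 W,
\]
while $V(r)\approx V(0)$. Hence
\[
J_0J_r\approx-\tfrac{r^2}{4}\bigl(\Re(W\bar V)\bigr)^2,
\]
and the conditioning reduces, in the limit, to conditioning on $\xi_n=0$ and $a+b=0$.

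This is also where the case $n=0$ separates. There $b=0$, so $a+b=0$ kills $V$ as well, an extra factor of $r$ appears, and the limit is $0$, consistent with the formula.

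\emph{Step 4: Gaussian moment and assembly.} I would compute
\[
\mathbb E\bigl[(\Re(W\bar V))^2\mid \xi_n=0,\ a+b=0\bigr]
=\tfrac12\,\mathbb E|W|^2\,\mathbb E|V|^2+\tfrac12\Re\bigl(\mathbb E[W^2]\,\overline{\mathbb E[V^2]}\bigr)+\bigl|\mathbb E[W\bar V]\bigr|^2\cdot(\text{Wick terms}).
\]
The moments are taken under the conditional law, obtained by the Schur complement as in \eqref{def:cond_law}. Because everything is a linear form in independent $\xi_k$, these moments are explicit polynomials in $n$. In particular, given $a=-b$ one has $V=2a$ and $\mathbb E|V|^2=\frac{4n(n+1)}{2n+1}$. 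Then
\[
\lim_{r\to0}\rho_n^{[2]}(r)=\frac{\mathbb E[(\Re W\bar V)^2\mid\cdots]}{4\pi^2(2n+1)}.
\]
Dividing by $(\rho_n^{[1]})^2$ should yield $\frac{n(n+1)(n^2+n+1)}{(2n^2+2n+1)^2}$.

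\emph{Expected obstacle.} The main difficulty is making Step 3 rigorous. One must justify that the conditional law of $(\partial f_n(0),\partial f_n(r))$ given $X=0$ converges, after the rescaling $\partial_x f/r$, to the law described above, and that the $O(r^3)$ remainders do not matter. I would handle this by working with the exact Gaussian vector $Y\mid X=0$ and its covariance $\Gamma(r)$ from \eqref{def:cond_law}, and expanding $\Gamma(r)$ in $r$ explicitly. Since $J_0,J_r$ are quadratic forms of a Gaussian vector whose covariance converges after rescaling, convergence of $\mathbb E|J_0J_r|/r^2$ follows from uniform integrability, because all moments are bounded.
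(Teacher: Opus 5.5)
Your proposal is correct. It takes a genuinely different route to the limit, although the underlying decomposition is the paper's: your identity $J=\Re\bigl(\partial_x f\,\overline{V}\bigr)$ is exactly $J_0=\Re(\overline{D_0}S_0)$ with $S_0=z_1+z_2$ and $D_0=z_1-z_2$.

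\textbf{How the paper proceeds.} It expands every entry of the conditional covariance $\Gamma(r)$ to order $r^2$ from the kernel $\tilde K^n$. It shows $D_1-D_0\to0$ and $(S_0+S_1)/r\to0$ in $L^2$, invokes Slutsky's theorem, and deduces asymptotic independence of $D$ and $\tilde S$ from $\Gamma_{11}-\Gamma_{22}=O(r^2)$.

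\textbf{How you proceed.} You Taylor-expand the field itself, which identifies the limit variables explicitly. The paper's $\tilde S$ becomes $-W/2$ and $D$ becomes $V$, under the conditioning $\xi_n=0$, $a+b=0$.

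\textbf{What your route buys.}
\begin{itemize}
\item Independence becomes structural rather than computed. From $\partial_z^2 f_n(0)=\sqrt{(n+1)(n+2)}\,\xi_{n+2}$, $\partial_z\partial_{\bar z}f_n(0)=-n\xi_n$ and $\partial_{\bar z}^2 f_n(0)=\sqrt{n(n-1)}\,\xi_{n-2}$, on $\{\xi_n=0\}$ one has $W=\sqrt{(n+1)(n+2)}\,\xi_{n+2}+\sqrt{n(n-1)}\,\xi_{n-2}$. This involves neither $\xi_{n\pm1}$ nor the conditioning, so it is independent of both $V$ and the conditioning.
\item Rigor is cheap. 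Gaussian conditioning at $0$ only shrinks covariances, so Taylor remainders stay $O(r)$ in conditional $L^2$. The conditioning vector $\bigl(\xi_n,(f_n(r)-f_n(0))/r\bigr)$ has nondegenerate limit covariance $\mathrm{diag}(1,2n+1)$, so the Schur complements converge.
\item With the uniform integrability you invoke, this fills the step the paper passes over, from convergence in distribution to convergence of $\mathbb E|J_0J_r|/r^2$.
\item It shows $J_0J_r\approx-\tfrac{r^2}{4}(\Re W\overline{V})^2<0$, i.e.\ very close zeros carry opposite charges.
\end{itemize}

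\textbf{What the paper's route buys.} It reuses the same $\Gamma(r)$ that drives the numerical evaluation of the unsigned, signed and hybrid intensities at all $r$.

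\textbf{Completing Step 4.} Your displayed moment formula there is only a placeholder. The exact identity for jointly circular $W,V$ (so $\mathbb E[W^2]=\mathbb E[V^2]=0$), with $c=\mathbb E[W\overline{V}]$, is
\[
\mathbb E\bigl[(\Re W\overline{V})^2\bigr]=\tfrac12\,\mathbb E|W|^2\,\mathbb E|V|^2+\tfrac12|c|^2+\Re(c^2).
\]
Here $c=0$ by the independence above, and $\mathbb E|W|^2=2(n^2+n+1)$. Hence the conditional moment equals $\frac{4n(n+1)(n^2+n+1)}{2n+1}$, and
\[
\lim_{r\to0}\rho_n^{[2]}(r)=\frac{n(n+1)(n^2+n+1)}{\pi^2(2n+1)^2}.
\]
Dividing by $(\rho_n^{[1]})^2$ gives exactly $l_n$, so your ``should yield'' becomes an actual verification.

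\textbf{The case $n=0$.} It needs no separate treatment, since $\mathbb E|V|^2=\frac{4n(n+1)}{2n+1}$ already vanishes there. This also shows that the ``nonzero'' part of the statement holds only for $n\ge1$.
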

The proof is available in Appendix \ref{AppendixD}. Finally, defining the sequence $(l_n)_{n\ge0}$ defined for $n\ge0$:
\begin{eqnarray*}
l_n := \frac{n(n+1)(n^2+n+1)}{(2n^2+2n+1)^2},
\end{eqnarray*}
we immediately get that $l_n$ tends to $\frac{1}{4}$. Moreover, the sequence $(l_n)_{n\ge0}$ is strictly increasing.
The proof is available in Appendix \ref{AppendixE}, an illustration of this result, being given in Fig. \ref{Fig2} (b).
\subsection{Signed 2-point intensity}
\label{sec:signed_2points}
In this section, we are interested in computing the signed 2-point intensity: 
\begin{eqnarray}
\label{def:signed_2points}
\begin{aligned}
\tilde \rho_n^{[2]} (r):= \mathbb{E}[J_0 J_r] p_{f_n(0),f_n(r)} (0,0)=\frac{\mathbb{E}[\omega(Z)]}{\pi^2 \det \Sigma_X},
\end{aligned}
\end{eqnarray}
with 
$$
\omega(Z) = (|z_1|^2-|z_2|^2)(|z_3|^2-|z_4|^2),
$$
for which have the following expression: 
\begin{proposition}
\label{signed_rho2}
\begin{eqnarray*}
\tilde \rho_n^{[2]} (r) =
\frac{(\Gamma_{11} - \Gamma_{22})(\Gamma_{33} - \Gamma_{44}) 
+|\Gamma_{13}|^2 - |\Gamma_{14}|^2-|\Gamma_{23}|^2 + |\Gamma_{24}|^2}{\pi^2 \det \Sigma_X}.
\end{eqnarray*}
where $\Gamma$ is defined in (\ref{def:cond_law}).
\end{proposition}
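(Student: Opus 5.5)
The plan is to compute $\mathbb{E}[\omega(Z)]$ directly from the second-order moments of the centered complex Gaussian vector $Z\in\C^4$ with covariance $\Gamma$, defined in \eqref{def:cond_law}. Once this is done, we divide by $\pi^2\det\Sigma_X$ using \eqref{density_rho2} evaluated at $y=0$. This step is already built into the definition \eqref{def:signed_2points}, because the signed Kac--Rice formula, exactly as in \eqref{def:signed_kac}, replaces $|J_0J_r|$ by $J_0J_r$.

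We first expand
\begin{equation*}
\omega(Z)=|z_1|^2|z_3|^2-|z_1|^2|z_4|^2-|z_2|^2|z_3|^2+|z_2|^2|z_4|^2 .
\end{equation*}
Each term has the form $\mathbb{E}[|z_j|^2|z_k|^2]$ with $j\in\{1,2\}$ and $k\in\{3,4\}$. The key step is the complex Isserlis (Wick) formula for a circularly symmetric complex Gaussian vector, which gives
\begin{equation*}
\mathbb{E}[z_j\bar z_j z_k\bar z_k]=\mathbb{E}[z_j\bar z_j]\,\mathbb{E}[z_k\bar z_k]+\mathbb{E}[z_j\bar z_k]\,\mathbb{E}[z_k\bar z_j]=\Gamma_{jj}\Gamma_{kk}+|\Gamma_{jk}|^2 .
\end{equation*}
The pseudo-covariance pairings $\mathbb{E}[z_jz_k]\,\mathbb{E}[\bar z_j\bar z_k]$ vanish by circular symmetry. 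Summing the four terms with their signs gives
\begin{equation*}
(\Gamma_{11}-\Gamma_{22})(\Gamma_{33}-\Gamma_{44})+|\Gamma_{13}|^2-|\Gamma_{14}|^2-|\Gamma_{23}|^2+|\Gamma_{24}|^2 ,
\end{equation*}
which is the claimed numerator. As an alternative consistent with the paper's approach, the same result follows from $-\partial_s\partial_t\varphi(s,t)\big|_{s=t=0}$ applied to \eqref{eq:def_phi}. Expanding $\log\det(I-i\Gamma\Lambda)^{-1}=\sum_m \frac{i^m}{m}\mathrm{tr}((\Gamma\Lambda)^m)$ to second order, the mixed term gives $\mathrm{tr}(\Gamma A_1)\mathrm{tr}(\Gamma A_2)+\mathrm{tr}(\Gamma A_1\Gamma A_2)$, and evaluating these traces with the diagonal $A_1,A_2$ yields the same expression.

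The main obstacle is justifying circular symmetry of $Z$, i.e.\ that the pseudo-covariance $\mathbb{E}[ZZ^T]$ vanishes. For $X$ and $Y$ this follows from the representation $f_n=\sum_k\xi_kH_{k,n}$ with i.i.d.\ $\mathcal{CN}(0,1)$ coefficients. Every entry of $X$ and $Y$ is a complex-linear (not conjugate-linear) combination of the $\xi_k$, since the derivatives $\partial_z,\partial_{\bar z}$ act only on the deterministic polynomials. Hence $\mathbb{E}[\xi_k\xi_l]=0$ implies that $(X,Y)$ is jointly circular. Conditioning a jointly circular Gaussian vector on $X=0$ amounts to subtracting the linear regression on $X$, which preserves circularity and gives covariance $\Gamma$. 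The formula therefore applies.

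We also need $\mathbb{E}[|\omega(Z)|]<\infty$ and the interchange between the conditional expectation and the Kac--Rice density. Both are routine, since $\omega$ is a polynomial and $\Sigma_X$ is invertible for $r>0$.
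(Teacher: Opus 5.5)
Your proposal is correct and follows the paper's proof in Appendix~\ref{AppendixF} essentially step for step: expand $\omega(Z)$ into the four terms $\mathbb{E}[|z_j|^2|z_k|^2]$, apply Isserlis' theorem with vanishing pseudo-covariances to get $\Gamma_{jj}\Gamma_{kk}+|\Gamma_{jk}|^2$, and sum with signs. Your justification of circularity is more careful than the paper's, which simply infers $z_i\sim e^{i\theta}z_i$ from Gaussianity; you instead note that the entries of $X,Y$ are complex-linear in the i.i.d.\ $\mathcal{CN}(0,1)$ coefficients $\xi_k$, and that conditioning on $X=0$ preserves this.
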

The proof is available in Appendix \ref{AppendixF}. 
\subsection{Hybrid 2-point intensity}
\label{sec:hybrid_2points}
Let us now compute the hybrid 2-point intensity:
\begin{eqnarray*}
\hat{\rho}_n^{[2]}  (r):= \mathbb{E}[|J_0| J_r] p_{f_n(0),f_n(r)} (0,0)= \frac{\mathbb{E}[|J_0| J_r]}{\pi^2 \det \Sigma_X}.
\end{eqnarray*}
We have showed in (\ref{sec:stab_trans}) and (\ref{sec:stab_rot}) that the zeros of $f_n$ are invariant by translation and rotation. Then defining the two following transformations :
\begin{eqnarray*}
T(z)=z-r, \quad R(z) = e^{i\pi}z,
\end{eqnarray*}

we have:
\begin{eqnarray*}
\mathbb{E}[|J_0| J_r]= \mathbb{E}[|J_{T(0)}| J_{T(r)}]= \mathbb{E}[|J_{-r}| J_0] 
= \mathbb{E}[|J_{R(-r)}| J_{R(0)}] 
= \mathbb{E}[|J_r| J_0],
\end{eqnarray*}
and one has the following proposition:
\begin{proposition}
\label{hybrid_rho2}
\begin{eqnarray}
\hat{\rho}_n^{[2]}  (r) =  \frac{2}{\pi} \int_0^\infty \frac{\mathrm{Tr}(\Gamma A_2) - \Re \left( \frac{\mathrm{Tr} (M^{-1} \Gamma A_2)}{\det(M)}  \right) }{s^2} \ ds \times \frac{1}{\pi^2 \det \Sigma_X},
\end{eqnarray}
where 
\begin{eqnarray*}
M = I-i \Gamma A_1.
\end{eqnarray*}
\end{proposition}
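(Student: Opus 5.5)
We would prove the formula for $\mathbb{E}[|J_0|J_r]$ with the same Fourier device used for Proposition \ref{unsigned_rho2}, namely the integral representation of the absolute value
\begin{equation*}
|x| = \frac{2}{\pi}\int_0^\infty \frac{1-\cos(sx)}{s^2}\,ds, \qquad x\in\R .
\end{equation*}
Substituting $x=J_0$, multiplying by $J_r$ and taking expectations gives
\begin{equation*}
\mathbb{E}[|J_0|J_r] = \frac{2}{\pi}\int_0^\infty \frac{\mathbb{E}[J_r]-\mathbb{E}[J_r\cos(sJ_0)]}{s^2}\,ds .
\end{equation*}
Dividing by $\pi^2\det\Sigma_X$, as in \eqref{def:2points}, then yields $\hat\rho_n^{[2]}(r)$. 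Two things remain: identify the two expectations in closed form, and justify swapping $\mathbb{E}$ with $\int ds$.

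\textbf{The first term.} Since $Z$ is centered with covariance $\Gamma$,
\begin{equation*}
\mathbb{E}[J_r]=\mathbb{E}[Z^*A_2Z]=\mathrm{Tr}(\Gamma A_2).
\end{equation*}

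\textbf{The second term.} Because $J_r$ is real, $\mathbb{E}[J_r\cos(sJ_0)]=\Re\,\mathbb{E}[J_re^{isJ_0}]$. We would obtain $\mathbb{E}[J_re^{isJ_0}]$ as $-i\,\partial_t\varphi(s,t)\vert_{t=0}$, with $\varphi$ as in \eqref{eq:def_phi}. Differentiation under the expectation is legitimate because $|\partial_t e^{i(sJ_0+tJ_r)}|=|J_r|$, which is integrable: $J_r$ is a quadratic form in a Gaussian vector, so it has moments of all orders. Set $M=I-is\Gamma A_1$, so that $\varphi(s,t)=\det(M-it\Gamma A_2)^{-1}$. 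Jacobi's formula $\partial_t\det(B(t))=\det(B)\,\mathrm{Tr}(B^{-1}B')$ then gives
\begin{equation*}
\partial_t\varphi(s,t)\vert_{t=0}=\frac{i\,\mathrm{Tr}(M^{-1}\Gamma A_2)}{\det M},
\qquad\text{hence}\qquad
\mathbb{E}[J_re^{isJ_0}]=\frac{\mathrm{Tr}(M^{-1}\Gamma A_2)}{\det M}.
\end{equation*}
We take the real part and insert it above. In the statement the factor $s$ is absorbed into $M$, so $M=M(s)=I-is\Gamma A_1$.

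\textbf{Invertibility of $M$.} We must check that $\det M\neq 0$ for every $s$. The matrix $\Gamma A_1$ is similar, via $\Gamma^{1/2}$, to the Hermitian matrix $\Gamma^{1/2}A_1\Gamma^{1/2}$; in the singular case we argue with a limit, or directly on the range of $\Gamma$. Hence the eigenvalues $\lambda_j$ of $\Gamma A_1$ are real, and $\det M=\prod_j(1-is\lambda_j)\neq0$.

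\textbf{Main obstacle: Fubini.} The step requiring care is the interchange of $\mathbb{E}$ and $\int_0^\infty ds$. We would use the bound
\begin{equation*}
\frac{|J_r|\,(1-\cos(sJ_0))}{s^2}\le |J_r|\min\!\Bigl(\frac{J_0^2}{2},\frac{2}{s^2}\Bigr).
\end{equation*}
Splitting the $s$-integral at $s=1/|J_0|$ bounds the integral by $C|J_r||J_0|$, and this has finite expectation by Cauchy--Schwarz and the Gaussian moments of $Z$. The absolute integrability also shows that the final $ds$-integral converges near $s=0$, where its numerator is $O(s^2)$, and near $\infty$, where it decays like $s^{-2}$.

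Finally, by the translation and rotation invariance noted before the statement, $\mathbb{E}[|J_0|J_r]=\mathbb{E}[|J_r|J_0]$. So the choice of which factor carries the absolute value is immaterial, and the formula is symmetric under exchanging $A_1$ and $A_2$.
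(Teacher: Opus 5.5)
Your proposal is correct and follows the same route as the paper's own proof: the integral representation of $|J_0|$, multiplication by $J_r$, the identity $\mathbb{E}[J_r]=\mathrm{Tr}(\Gamma A_2)$, and Jacobi's formula applied to $\varphi(s,t)=\det\bigl(I-i\Gamma(sA_1+tA_2)\bigr)^{-1}$ at $t=0$. Your additions are valid refinements that the paper leaves implicit: the Fubini and differentiation-under-the-expectation justifications, the invertibility of $M$, and reading $M$ as $I-is\Gamma A_1$ (the paper's proof uses exactly this as $M_{t=0}$, although the statement omits the factor $s$).
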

The proof is available in Appendix \ref{AppendixG}.

\subsection{On the repulsivity of charged zeros}
\label{sec:repuls_zeros}

In this section, we will see how different charges behave by computing the 2-point intensity between charges of the same sign as well as the charges of different signs. In what follows, we denote $\rho_n^{[2],++}$, $\rho_n^{[2],--}$ and $\rho_n^{[2],+-}$, the 2-point intensity between zeros charged positively, zeros charged negatively and zeros of different charges respectively.

In order to compute $\rho_n^{[2],++}$, $\rho_n^{[2],--}$ and $\rho_n^{[2],+-}$, we use the different 2-point intensities already computed in the previous sections, indeed one has these relationships:

\begin{eqnarray*}
\rho_n^{[2]} &=& \mathbb{E}[|J_0 J_r| ] = \rho_n^{[2],++} + \rho_n^{[2],--} + 2 \rho_n^{[2],+-},\\
\tilde \rho_n^{[2]} &=& \mathbb{E}[J_0 J_r ] = \rho_n^{[2],++} + \rho_n^{[2],--} - 2 \rho_n^{[2],+-},\\
\hat \rho_n^{[2]} &=& \mathbb{E}[|J_0| J_r ] = \rho_n^{[2],++} - \rho_n^{[2],--},
\end{eqnarray*}
therefore the repulsion between different charged zeros 
\begin{eqnarray*}
\rho_n^{[2],++} &=& \frac{1}{2} \ \left(\frac{\rho_n^{[2]} + \tilde \rho_n^{[2]}}{2} +   \hat \rho_n^{[2]}\right), \\
\rho_n^{[2],--} &=& \frac{1}{2} \ \left(\frac{\rho_n^{[2]} + \tilde \rho_n^{[2]}}{2} -   \hat \rho_n^{[2]}\right), \\
\rho_n^{[2],+-} &=&  \frac{\rho_n^{[2]} - \tilde \rho_n^{[2]}}{4},
\end{eqnarray*}
and define the normalized pair correlation function $g_n^{[2],++}$, $g_n^{[2],--}$ and $g_n^{[2],+-}$ as follows:
\begin{eqnarray*}
g_n^{[2],++} &=& \frac{\rho_n^{[2],++}}{\rho_n^{[1],+}\rho_n^{[1],+}}, \\
g_n^{[2],--} &=& \frac{\rho_n^{[2],--}}{\rho_n^{[1],-}\rho_n^{[1],-}}, \\
g_n^{[2],+-} &=& \frac{\rho_n^{[2],+-}}{\rho_n^{[1],+}\rho_n^{[1],-}}.
\end{eqnarray*}
In Fig. (\ref{Fig3}), $g_n^{[2],++}$, $g_n^{[2],--}$ and $g_n^{[2],+-}$ are plotted for $n=1$, $n=2$ and $n=3$.

At small scales, the zeros with the same charge represent a strong short-range repulsion.
As $r$ increases, this repulsion becomes progressively weaker, as $g_n^{[2],++}$ and $g_n^{[2],--}$ increases to 1.
For zeros with opposite charges, the short-range repulsion is weaker as $g_n^{[2],+-} $ remains larger near the origin.


\begin{figure*}
\begin{subfigure}{0.32\textwidth}
\centering
\includegraphics[scale = 0.4]{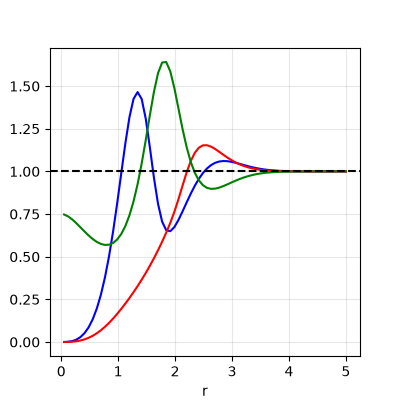}
\caption{}
\end{subfigure}
\hfill
\begin{subfigure}{0.32\textwidth}
\centering
\includegraphics[scale = 0.4]{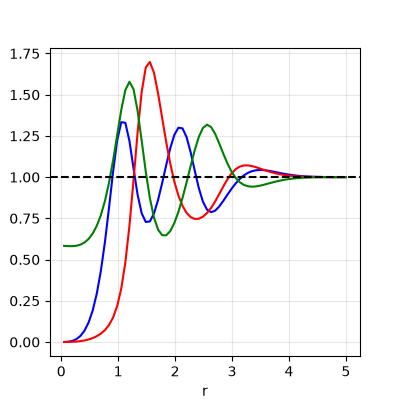}
\caption{}
\end{subfigure}
\hfill
\begin{subfigure}{0.32\textwidth}
\centering
\includegraphics[scale = 0.4]{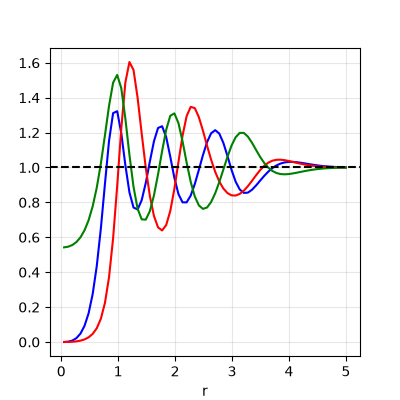}
\caption{}
\end{subfigure}
\caption{$g_n^{[2],++}$ (blue), $g_n^{[2],--}$ (red) and $g_n^{[2],+-}$ (green) (a) for $n=1$ (b) for $n=2$ and (c) for $n=3$.}
\label{Fig3}
\end{figure*}

\section{Detection of  non-degenerate zeros of $f_n$}\label{sec:detection}

In this section, we address the question of how to detect isolated zeros of a function $f_n$. For $n=0$, we already know that the zeros of $V_{h_0}$ correspond to the local minima of its spectrogram. For $n\ge 1$, however, not every local minimum corresponds to a zero.

To compute the zeros of $f_n$, for $n\ge 1$, one may first detect the sign change of both the real and imaginary parts simultaneously within a cell grid. However, this method can miss zeros that are not associated with such a sign change. Also, when the STFT presents high variations, the detection may then fail between consecutive points, which may also affects later the estimation of the Jacobian $\det J_{f_n}$ which depends on the derivatives of $\Re V_{h_n}$ and $\Im V_{h_n}$. 

To overcome these limitations, we look at the variation of the complex phase around a point \cite{SETE2024128595, haimi2026phasejumpsfastcomputationzeros}. Indeed, when a contour $\gamma$ encloses an isolated zero, its image under $f_n$ winds around the origin of the complex plane. This topological property is quantified by the winding number. 

\subsection{Detection based on winding number}
Since a zero is necessarily a local minima, one can first detect all the local minima of the spectrogram and use them as candidate points, and then determine if such a candidate is a zero by studying the phase variation of $f_n$ along a small contour surrounding it. 
First we define \cite{rudin2021principles}:
\begin{definition}
If $\gamma:[a,b] \to \C $ is a closed curve, the winding number of $\gamma$ around $z_0 \in \C$ is defined for $z_0 \notin \gamma ([a,b])$ as: 
\begin{equation}
    wind(\gamma, z_0) = \frac{1}{2\pi} \Delta_{t\in[a,b]} \arg(\gamma(t) - z_0).
\end{equation}
\end{definition}

So that the winding number measures how many times the curve winds around the point $z_0$ counted with orientation. In particular, if $\gamma$ is a positively oriented circle with center at $z_0 \in \C$ and a radius $r$ , ie $\gamma(t)=z_0+re^{it}, \ t \in [0, 2\pi]$, then \cite{rudin1974real}:
\begin{eqnarray}
wind(\gamma, z) = \begin{cases}
1, & |z-z_0|<r, \\
0, & |z-z_0|>r.
\end{cases}
\end{eqnarray}

Now consider $f_n$ and a closed contour $\gamma$ in the domain of $f_n$ such that $\forall t \in [a,b], \  f_n(\gamma(t)) \neq 0 $. Then $f_n \circ \gamma$ is a closed curve in $\C^* $, and the winding number of $f_n\circ \gamma$ around the point $0$, corresponds to: 
\begin{eqnarray*}
    wind(f_n\circ \gamma, 0) = \frac{1}{2\pi} \Delta_{t\in[a,b]} \arg(f_n(\gamma(t))),
\end{eqnarray*}

and therefore it measures the total phase variation of $f_n$ along $\gamma$. 

In practice, the winding number should be approximated by a finite number of samples along the contour. In our approach, these values are not necessarily located on the initial sampling grid. We first construct a separable bicubic interpolation of the real and imaginary part of $V_{h_n}$ \cite{jacquemet2018phase}, and evaluate it on equally separated points on the contour. 

Formally, if these points of the contour are denoted by $\zeta_0, \dots , \zeta_N $, the winding number can be approximated by the discrete sum:
\begin{eqnarray*}
wind(V_{h_n}\circ\gamma,0)
    \approx \frac{1}{2\pi}\sum_{k=0}^{N-1} \arg\!\bigl(\zeta_{k+1}\overline{\zeta_k}\bigr), \ 
    \zeta_{N}=\zeta_0.
\end{eqnarray*}

This approximation is based on the assumption that the phase variation between two consecutive samples remains sufficiently small because large phase variations can lead to incorrect estimate of the winding number. To avoid this situation, one can add a criterion on the phase variation proposed by \cite{haimi2026phasejumpsfastcomputationzeros}, which consists on keeping only points where:
\begin{eqnarray}
\max_{0\leq k\leq N-1} | \arg \ \bigl[ \zeta_{k+1} \overline{\zeta_k} \bigl] | < 0.9 \pi.
\end{eqnarray}

In our approach, the bicubic interpolation allows us to evaluate $V_{h_n}$ at an arbitrarily fine number of points along the contour and the angular sampling can be then adjusted to control the phase variations between two successive points.

The radius is then chosen such that the contour encloses only the candidate zero point while avoiding others candidate zeros. The choice of the radius is discussed in the next section.

\begin{algorithm}[H]
\caption{Detection of non-degenerate zeros of $f_n$}
\begin{algorithmic}[1]
\State \textbf{Input:} STFT $V_{h_n}\xi$ and a radius $R$
\State Compute the local minima of $|V_{h_n}|$
\State Interpolate $\Re V_{h_n}$ and $\Im V_{h_n}$ using bicubic splines to obtain $\tilde{V}_{h_n}$
\For{each local minimum $z_0$}
    \State Define $\gamma_R (\theta) = z_0 + e^{i\theta}$
    \State Set $\theta_k = \frac{2\pi k}{N}$ and $\zeta_k = \tilde{V}_{h_n}(\gamma(\theta_k))$
    \State Compute $wind (V_{h_n}\circ \gamma_R,0)
    = \frac{1}{2\pi}\sum_{k=0}^{N-1} \arg\!\bigl(\zeta_{k+1}\overline{\zeta_k}\bigr)$, $\zeta_{N}=\zeta_0$
    \If{$ | wind (V_{h_n}\circ \gamma_R,0) | =1$}
        \State Accept $z_0$ as zero and assign charge sign$(wind (V_{h_n}\circ \gamma_R,0))$.
    \EndIf
\EndFor
\end{algorithmic}
\end{algorithm}

Note that the winding number in our case can only take three values: $1$, $-1$ or $0$. To show that, let $z_0$ be an isolated zero of the polyanalytic function $f(z, \bar{z})$, then near $z_0$, we may write $$f(z,\bar z) \approx f(z_0,\bar z_0) + a (z - z_0) + b (\bar{z} - \bar{z}_0),$$
where $a = \frac{\partial f}{\partial z}(z_0,\bar z_0)$ and $b=\frac{\partial f}{\partial \bar{z}}(z_0,\bar z_0)$. $z_0$ is a zero so $f(z_0,\bar z_0)=0$ and on a small circle around $z_0$, $z=z_0 + r e^{i\theta}$ we get:
\begin{eqnarray*}
f(z,\bar z) \approx  a r e^{i\theta} + b r e^{-i\theta}  = r (a e^{i\theta} + b e^{-i\theta} )
\end{eqnarray*}
    
Writing $a =|a|e^{i\alpha}$ and $b =|b|e^{i\beta}$, one has:
\begin{eqnarray*}
f(z,\bar z) \approx r e^{i\frac{\alpha+\beta}{2}} \bigl( (|a| + |b|) \cos \psi  + i(|a| - |b|) \sin \psi \bigr)
\end{eqnarray*}
where $\psi = \theta + \frac{\alpha-\beta}{2}$.

This shows that the image of a circle is approximately a closed ellipse curve rotated by an angle of $\frac{\alpha+\beta}{2}$ in the complex plane. As $\theta$ runs from $0$ to $2 \pi$, so does $\psi$, and therefore $\cos \psi$ and $\sin \psi$ each complete one full period. Thus, in the nondegenerate case when $\abs a \neq \abs b$, and when the contour encloses the simple zero $z_0$, the point traces the ellipse exactly once and returns to its starting point and the winding number is necessarily ($+1$) or ($-1$), according to the sign of 
$$J_f(z_0)={\abs a}^2 - {\abs b}^2.$$
If the contour does not enclose any zero, then the winding number is ($0$).

\subsection{The optimal radius of the contour of the winding numbers for zeros detection }
Zeros detection using the proposed algorithm depends strongly on the chosen radius of the contour, indeed a small circle around the zero will detect the corresponding zero, however, as the radius of the circle increases, multiple zeros will fall into the same contour and the algorithm may fail. To determine the optimal radius of the corresponding contour, we will use densities as a criterion. 

Let $\overline{\rho^{[1],+}_{n}}$ and $\overline{\rho^{[1],-}_{n}}$ be the estimated average of the positive and negative first-point intensities respectively over a number of realizations and define the following error:

\begin{eqnarray}
E_n (R) = \frac{1}{2} \ ( \varepsilon_{n,+} + \varepsilon_{n,-}),
\end{eqnarray}
where
\begin{eqnarray}
\varepsilon_{n,+}=\frac{\left|\ \overline{\rho^{[1],+}_{n}} - \rho^{[1],+}_n \right|}{\rho^{[1],+}_n}, \ 
\varepsilon_{n,-}=\frac{\left|\ \overline{\rho^{[1],-}_{n}} - \rho^{[1],-}_n \right|}{\rho^{[1],-}_n}.
\end{eqnarray}

The optimal radius is then obtained as:
\begin{eqnarray*}
R_n^*=\argmin_{R>0}\ E_n(R).
\end{eqnarray*}
We plot the error as a function of the radius $R$ in Fig \ref{Fig4}.

For small values of the radius, the contour is very close to the zero candidate, and the evaluation of the winding number becomes more sensitive to numerical interpolation. As $R$ increases, the estimation becomes more stable leading to a decrease in the error until it reaches a minimum around $R_n^*$. 

However, when $R > R_n^*$, the circle may enclose additional zeros. Since the winding number is additive with respect to the topological charges, its value is then no necessarily equal to $\pm 1$. For example, two zeros with the same charge give a winding number of modulus 2, whereas a zero with charge $+1$ together with a zero with charge $-1$ gives a winding number of 0.

Thus, the detection criterion $| wind (V_{h_n}\circ \gamma_R,0) | =1$, which is designed to a contour containing only one simple zero, may no longer be satisfied when multiple zeros are enclosed, leading to an increase in the detection error for large values of $R$.

\begin{figure*}
\begin{subfigure}{0.32\textwidth}
\centering
\includegraphics[scale = 0.4]{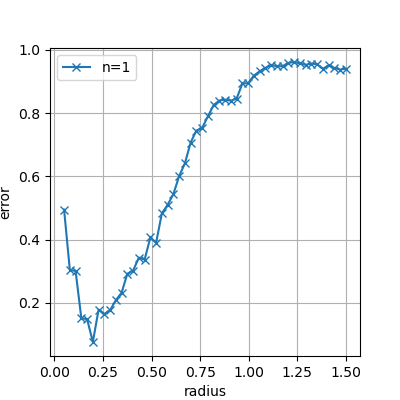}
\caption{}
\end{subfigure}
\hfill
\begin{subfigure}{0.32\textwidth}
\centering
\includegraphics[scale = 0.4]{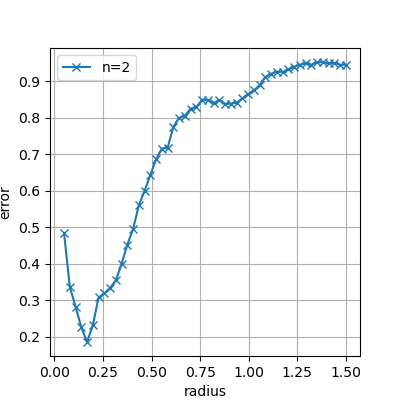}
\caption{}
\end{subfigure}
\hfill
\begin{subfigure}{0.32\textwidth}
\centering
\includegraphics[scale = 0.4]{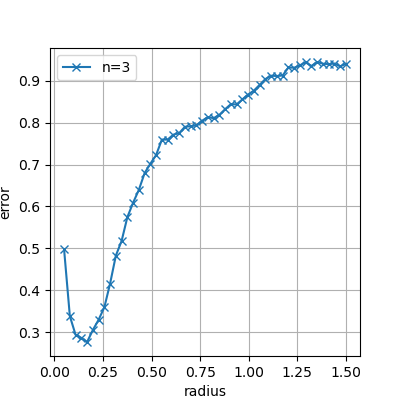}
\caption{}
\end{subfigure}
\caption{ The error $E_n$ as a function of the radius $r$ (a): for $n=1$ (b): for $n=2$ (c): for $n=3$}
\label{Fig4}
\end{figure*}


\section{Conclusion}

In this work, we studied the zeros of the spectrogram of complex white noise analyzed with Hermite windows of order $n\geq 1$, and showed that these zero patterns are statistically organized into two distinct populations, carrying either a positive or a negative charge.  We derived closed-form expressions for the one-point intensity and its decomposition by charge, as well as for the unsigned, signed, and hybrid two-point intensities. In particular, we showed that the pair correlation function $r\mapsto g_n^{[2]}(r)$ converges as $r\to 0$ to a strictly positive constant $l_n$, in contrast with the total repulsion phenomenon observed for the Gaussian window. Finally, we proposed a practical method for detecting the zeros of a spectrogram based on the winding number computed on small circular contours around candidate local minima. As a perspective, we plan to exploit the structure of these zeros to discriminate signal and noise using Delaunay triangulation of 
zeros according to whether they are charged positively or negatively.

\section{Appendix}
\subsection{Proof of Proposition \ref{prop:Hkn}}
\label{AppendixA}
Let us first remark that 
\begin{eqnarray}
\label{isometry-B}
\begin{aligned}
V_{h_n}h_k(u,-v) = \langle h_k, M_{-v} T_u h_n \rangle =  \langle M_{v} T_u h_n,h_k \rangle = \langle   {\cal B} M_v T_u h_n,  {\cal B} h_k \rangle_{\F}.
\end{aligned}
\end{eqnarray}
Putting $z_0 = u+iv$, let us first remark that
$$
({\cal B}  M_v T_{u} h_n)(z):= e^{- \frac{\pi}{2}|z_0|^2 + i\pi v u} \frac{\pi^\frac{n}{2}}{\sqrt{n!}} F(z),
$$
where 
\begin{equation}
\label{def:F}
F(z)=e^{\pi z z_0} (z - \bar{z_0})^n.
\end{equation}
To compute \eqref{isometry-B}, using the fact that ${\cal B}h_k (z) = \sqrt\frac{\pi^k}{k!} z^k$ 
is an orthonormal basis in $\F$, we identify the coefficient of $z^k$ in $F(z)$, denoted by $[z^k]F(z)$, and then multiply it by $\sqrt\frac{k!}{\pi^k}$.
Writing the binomial expansion of $(z - \bar{z}_0)^n$ and the Taylor series
of $e^{\pi z z_0}$:
$$
(z - \bar{z}_0)^n = \sum_{i=0}^{n} \binom{n}{i} (-\bar{z}_0)^{n-i}\, z^i,
\
e^{\pi z z_0} = \sum_{p=0}^{\infty} \frac{(\pi z_0)^p}{p!}\, z^p.
$$
Hence
\[
F(z) = 
\sum_{i=0}^{n} \binom{n}{i}(-\bar{z}_0)^{n-i}
\sum_{p=0}^{\infty} \frac{(\pi z_0)^p}{p!}\, z^{i+p}.
\]
The coefficient of $z^k$ is obtained by setting $p = k - i$, which requires
$0 \leq i \leq \min(n,k)$:
\[
[z^k]\, F(z)
= 
\sum_{i=0}^{\min(n,k)} \binom{n}{i}
\frac{(-\bar{z}_0)^{n-i}(\pi z_0)^{k-i}}{(k-i)!}.
\]
Re-indexing $j=k-i$, then the sum goes from $\max(0, k-n)$ to $k$, one gets: 
$$
[z^k]\, F(z)
= 
\sum_{j=\max(0,k-n)}^{k} \binom{n}{k-j}
\frac{(-\bar{z}_0)^{n-k+j}(\pi z_0)^j}{j!}.
$$
If $k\leq n$, 
\begin{eqnarray*}
[z^k]\, F(z)
&=& (-\bar{z}_0)^{n-k}\sum\limits_{j=0}^{k} \binom{n}{k-j}
\frac{(-\pi |z_0|^2)^j}{j!},
\end{eqnarray*}
which corresponds to the standard form of the generalized Laguerre polynomial
\[
L_n^{(\alpha)}(x)
= \sum_{j=0}^{n} \binom{n+\alpha}{n-j} \frac{(-x)^j}{j!}.
\]
So for $k< n$, 
\begin{eqnarray*}
    \langle   {\cal B} M_v T_u h_n,  {\cal B} h_k \rangle_{\F} = 
    e^{ - \frac{\pi}{2}|z_0|^2 + i\pi v u }\, \sqrt{\frac{\pi^{n-k} k!}{n!}} 
    (-\bar{z}_0)^{n-k} L_k^{(n-k)}(\pi |z_0|^2).
\end{eqnarray*}
Now if $k>n$,
\[
[z^k]\, F(z)
=\sum\limits_{j=k-n}^{k} \binom{n}{k-j}
\frac{(-\bar{z}_0)^{n-k+j}(\pi z_0)^j}{j!}. \\
\]
Re-indexing this time $m=j-k+n$, one gets: 
\[
[z^k]\, F(z)
={z_0}^{k-n} \pi^{k-n}  \sum\limits_{m=0}^{n} \binom{n}{m}
\frac{(-\pi |z_0|^2)^{m} m!}{m! (m+k-n)!}. \\
\]
As $\binom{n}{m} \frac{m!}{(m+k-n)!} =\frac{n!}{k!} \binom{k}{n-m}$, one obtains:
\begin{align*}
[z^k]\, F(z)
&={z_0}^{k-n} \frac{\pi^k}{\pi^n} \frac{n!}{k!}  \sum_{m=0}^{n} \binom{k}{n-m}
\frac{(-\pi |z_0|^2)^m}{m!}.
\end{align*}
Thus 
\begin{eqnarray*}
    \langle   {\cal B} M_v T_u h_n,  {\cal B} h_k \rangle_{\F} =
 e^{ - \frac{\pi}{2}|z_0|^2 + i\pi v u }\, 
 \sqrt{\frac{\pi^{k-n} n!}{k!}} z_0^{k-n} L_n^{(k-n)}(\pi |z_0|^2).
\end{eqnarray*}

\subsection{Proof of Proposition \ref{unsigned_rho2}}
\label{AppendixB}
For every $s,t \in \R$, using we recall that
\begin{equation}
\varphi(s,t) =
\mathbb{E}\ \left[
\exp\bigl(i(sJ_0+tJ_r)\bigr)
\right].
\end{equation}
Using the Dirichlet integral 
$$\int_0^{\infty} \frac{\sin (x)}{x} \ dx=\frac{\pi}{2},$$  
one can prove that for all $x \in \R$:
\begin{equation*}
    |x| = \frac{2}{\pi}\int_0^{\infty} \frac{1-\cos(sx)}{s^2} \ ds.
\end{equation*}
Applying this to both $J_0$ and $J_r$,
we obtain:
\begin{eqnarray}
\label{integral_J0}
|J_0| = \frac{2}{\pi}\int_0^{\infty} \frac{1-\cos(s J_0)}{s^2} \ ds, \ 
|J_r| = \frac{2}{\pi}\int_0^{\infty} \frac{1-\cos(s J_r)}{s^2} \ ds,
\end{eqnarray}
therefore
\begin{eqnarray*}
|J_0 J_r| &=& 
\frac{4}{\pi^2}\iint_{(\R^+)^2} \frac{\bigr( 1-\cos(s J_0)\bigl) \bigr( 1-\cos(t J_r)\bigl) }{s^2 t^2} \ ds \ dt \\
&=&\frac{4}{\pi^2}\iint_{(\R^+)^2} \frac{ 1-\cos(s J_0)-\cos(t J_r) + \cos(s  J_0)  \cos(t  J_r) }{s^2 t^2} \ ds \ dt. 
\end{eqnarray*}
Then using
\begin{eqnarray*}
\cos(s  J_0)  \cos(t  J_r) = \frac{1}{2} \bigr( \cos(sJ_0 + tJ_r) + \cos(sJ_0 - tJ_r) \bigl),
\end{eqnarray*}
and
\begin{eqnarray*}
\mathbb{E}\ \left[
\cos\bigl(sJ_0+t J_r\bigr) \right] = \Re \varphi(s,t),
\end{eqnarray*}
one obtains
\begin{equation*}
\mathbb{E} \ \left[ \left| J_0 J_r \right| \right] = 
\frac{4}{\pi^2}
\int_0^\infty \int_0^\infty
\frac{\psi (s,t)}{s^2t^2}  ds dt,
\end{equation*}
where 
\begin{equation*}
    \psi(s,t)= 1-\Re\varphi(s,0)-\Re\varphi(0,t)
+\frac{1}{2}\bigl(\Re\varphi(s,t)+\Re\varphi(s,-t)\bigr),
\end{equation*}
which ends the proof.

\subsection{Implementation details}
\label{AppendixC}
The integral 
\begin{eqnarray}
\label{defJ}
J:= \int_0^\infty \int_0^\infty
\frac{\psi (s,t)}{s^2t^2}  ds dt,
\end{eqnarray}
is theoretically correct, but in practice becomes rapidly numerically unstable when $r$ increases. 
In order to explain this, let us consider the integral $$\frac{\pi}{2}|x|=  I(x)=\int_0^\infty \frac{1-\cos(sx)}{s^2}  ds. $$
When $|x|$ is too large, $s \rightarrow \cos(sx)$ varies rapidly, indeed, its zeros and the distance between two consecutive zeros are given respectively by:
\begin{equation*}
s_{k}=\frac{(2k+1)\pi}{2|x|}
\textrm{ and }
s_{k+1}-s_k= \frac{\pi}{|x|}.
\end{equation*}
Thus the variations of $\cos(sx)$ are located in intervals of length proportional to $\frac{1}{|x|}$ and a numerical discretization must see such variations which becomes numerically difficult especially in our case when $|x|$ is large.  

To overcome this problem, we make the change of variable $u=|x|s$ and the integral becomes 
$$ I(x)=|x| \int_0^\infty \frac{1-\cos(u)}{u^2}  du, $$
and the variations inside the integral no longer depend on $x$. In our case, the changes of variable that should be done are 
$u=|J_0| s = \sqrt{J_0^2} s$ and $v=|J_r| t=\sqrt{J_r^2}  t$, but since $J_0$ and $J_r$ are random realizations, we consider instead
\begin{eqnarray*}
u = \sqrt{\mathbb{E}[J_0^2]} s, \ 
v = \sqrt{\mathbb{E}[J_r^2]} t,
\end{eqnarray*}
and the integral \eqref{defJ} becomes 
\begin{eqnarray*}
J = a_0 a_r\int_0^\infty \int_0^\infty
\frac{\psi (\frac{u}{a_0},\frac{v}{a_r})}{u^2v^2}  du dv.
\end{eqnarray*}
To compute the quantities $a_0$ and $a_r$, we use the following result:
\begin{proposition}
\label{trace_gamma_A}
\begin{figure*}
\centering
\begin{tabular}{@{}cc@{}}
\includegraphics[scale = 0.49]{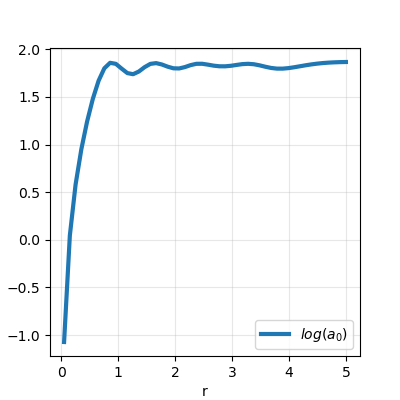}&\includegraphics[scale = 0.49]{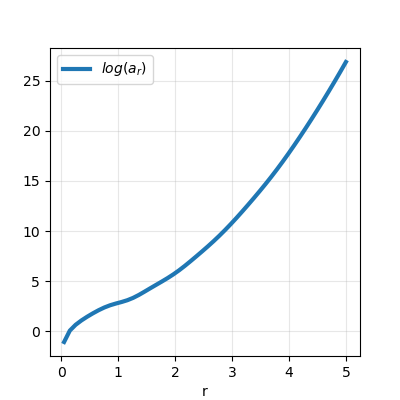}\\
(a)&(b)
\end{tabular}
\caption{Logarithmic plots of (a) $a_0$ and (b) $a_r$ as function of $r$.}
\label{Fig5}
\end{figure*}
Let us consider a Gaussian vector $Z$ with covariance matrix $\Gamma $ and a Hermitian matrix $A$, one has: 
\begin{eqnarray}
\mathbb{E}[(Z^* A Z )^2] &=& (\mathrm{Tr}(\Gamma A))^2 + \mathrm{Tr}(\Gamma A \Gamma A)
\end{eqnarray}
\end{proposition}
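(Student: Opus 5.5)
We prove the identity by computing the fourth moment of a centered circular complex Gaussian vector with Isserlis' (Wick's) theorem. Two facts about $Z$ are used. It is circular, so $\mathbb{E}[Z_iZ_j]=0$ and $\mathbb{E}[\bar Z_i\bar Z_j]=0$. Its covariance is $\Gamma_{ij}=\mathbb{E}[Z_i\bar Z_j]$, and we use the convention under which $\mathbb{E}[Z^*AZ]=\mathrm{Tr}(\Gamma A)$, consistent with the characteristic function \eqref{variable_determinant}.

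First expand the quadratic form as $Z^*AZ=\sum_{i,j}A_{ij}\bar Z_i Z_j$. Then
\begin{equation*}
\mathbb{E}[(Z^*AZ)^2]=\sum_{i,j,k,l}A_{ij}A_{kl}\,\mathbb{E}[\bar Z_i Z_j \bar Z_k Z_l].
\end{equation*}

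For a circular complex Gaussian vector, Wick's theorem keeps only the pairings that match each $Z$ with a $\bar Z$:
\begin{equation*}
\mathbb{E}[\bar Z_i Z_j \bar Z_k Z_l]=\mathbb{E}[Z_j\bar Z_i]\,\mathbb{E}[Z_l\bar Z_k]+\mathbb{E}[Z_j\bar Z_k]\,\mathbb{E}[Z_l\bar Z_i]=\Gamma_{ji}\Gamma_{lk}+\Gamma_{jk}\Gamma_{li}.
\end{equation*}
The pairing $(Z_j,Z_l)(\bar Z_i,\bar Z_k)$ vanishes by circularity. To justify this step, we either cite Isserlis' theorem for the real Gaussian vector $(\Re Z,\Im Z)$ and rewrite it in complex form, or differentiate the moment generating function twice.

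Substitute this into the sum. The first term gives
\begin{equation*}
\sum_{i,j} A_{ij}\Gamma_{ji}\,\sum_{k,l} A_{kl}\Gamma_{lk}=(\mathrm{Tr}(A\Gamma))^2=(\mathrm{Tr}(\Gamma A))^2.
\end{equation*}
The second term gives
\begin{equation*}
\sum_{i,j,k,l} A_{ij}\Gamma_{jk}A_{kl}\Gamma_{li}=\mathrm{Tr}(A\Gamma A\Gamma)=\mathrm{Tr}(\Gamma A\Gamma A).
\end{equation*}
Adding the two terms yields the claimed identity.

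As a cross-check, we can expand \eqref{variable_determinant} to second order in $t$. Writing $\log\det(I-it\Gamma A)^{-1}=\sum_{m\ge1}\frac{(it)^m}{m}\mathrm{Tr}((\Gamma A)^m)$, the mean is $\mathrm{Tr}(\Gamma A)$ and the variance is $\mathrm{Tr}((\Gamma A)^2)$. This again gives second moment $=$ mean$^2$ $+$ variance.

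The only delicate point is fixing the covariance convention: one must check that $\Gamma_{ij}=\mathbb{E}[Z_i\bar Z_j]$ is the convention matching \eqref{def:cond_law}, so that the traces come out as $\Gamma A$ and not $\Gamma^T A$. Since $\Gamma$ is Hermitian and the trace identities are invariant under transposition, both conventions give the same result.

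For the application in Appendix \ref{AppendixC}, take $A=A_1$ and $A=A_2$. This gives $a_0^2=\mathbb{E}[J_0^2]$ and $a_r^2=\mathbb{E}[J_r^2]$ explicitly from the entries of $\Gamma$.
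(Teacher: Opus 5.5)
Your proof is correct and follows the paper's route: expand $(Z^*AZ)^2$ as a fourfold index sum, apply Isserlis/Wick with circularity to the fourth moment, and identify the two surviving sums as $(\mathrm{Tr}(\Gamma A))^2$ and $\mathrm{Tr}(\Gamma A\Gamma A)$. Your explicit choice $\Gamma_{ij}=\mathbb{E}[Z_i\bar Z_j]$, which matches $\Sigma_Y=\mathbb{E}[YY^H]$, is cleaner than the paper's appeal to the ``symmetry'' of $\Gamma$, but you should delete your closing claim that both conventions agree: transposing $\Gamma$ alone replaces $A$ by $\bar A$ in both traces, so the two conventions coincide only for real symmetric $A$ (such as $A_1,A_2$), not for general Hermitian $A$, and your main computation does not need that remark.
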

\begin{proof}

We have 
\begin{eqnarray*}
(Z^* A Z)^2 = \left( \sum_{i,j} \bar{z_i} A_{i,j}z_j\right)^2 = \sum_{i,j}\sum_{k,l} \bar{z_i} A_{i,j}z_j \bar{z_k} A_{k,l}z_l,
\end{eqnarray*}
then introducing the expectation
\begin{eqnarray*}
\mathbb{E}[(Z^* A Z)^2] = \sum_{i,j}\sum_{k,l}  A_{i,j}A_{k,l}\mathbb{E}[\bar{z_i} z_j \bar{z_k} z_l],
\end{eqnarray*}
and using Isserlis's theorem \cite{isserlis1918formula}
\begin{eqnarray*}
\mathbb{E}[\bar{z_i} z_j \bar{z_k} z_l] = \Gamma_{i,j}\Gamma_{k,l} + \Gamma_{i,l}\Gamma_{k,j},
\end{eqnarray*}
we obtain
\begin{eqnarray*}
\mathbb{E}[(Z^* A Z)^2] &=& \sum_{i,j}\sum_{k,l}  A_{i,j}A_{k,l}\Gamma_{i,j}\Gamma_{k,l} + \sum_{i,j}\sum_{k,l}  A_{i,j}A_{k,l}\Gamma_{i,l}\Gamma_{k,j} \\
&=& \sum_{i,j}A_{i,j}\Gamma_{i,j}\sum_{k,l}  A_{k,l}\Gamma_{k,l} + \sum_{i,j}\sum_{k,l}  A_{i,j}A_{k,l}\Gamma_{i,l}\Gamma_{k,j} \\
&=& \left(\sum_{i,j}A_{i,j}\Gamma_{i,j}\right)^2 + \sum_{i,j}\sum_{k,l}  A_{i,j}A_{k,l}\Gamma_{i,l}\Gamma_{k,j}.
\end{eqnarray*}
On the other hand, and using the symmetry of $\Gamma$
\begin{eqnarray*}
\mathrm{Tr}(A\Gamma)=\sum_i (A\Gamma)_{i,i}=\sum_{i,j} A_{i,j}\Gamma_{j,i}=\sum_{i,j} A_{i,j}\Gamma_{i,j},
\end{eqnarray*}
and
\begin{eqnarray*}
\mathrm{Tr}(A\Gamma A\Gamma)&=&\sum_{i,k}(A\Gamma)_{i,k}(A\Gamma)_{k,i} 
= \sum_{i,k} \sum_j A_{i,j}\Gamma_{j,k} \sum_l A_{k,l}\Gamma_{l,i} \\
&=& \sum_{i,j, k, l} A_{i,j}\Gamma_{j,k}A_{k,l}\Gamma_{l,i} = \sum_{i,j, k, l} A_{i,j}A_{k,l}\Gamma_{j,k}\Gamma_{l,i}.
\end{eqnarray*}
Therefore
\begin{eqnarray*}
\mathbb{E}[(Z^* A Z)^2] = \left( \mathrm{Tr}(A\Gamma) \right)^2 + \mathrm{Tr}(A\Gamma A\Gamma)
= \left( \mathrm{Tr}(\Gamma A) \right)^2 + \mathrm{Tr}(\Gamma A\Gamma A).
\end{eqnarray*}
\end{proof}
The two quantities $a_0$ and $a_r$ are then given by:
\begin{eqnarray*}
a_0^2=\mathbb{E}[J_0^2] = (\mathrm{Tr}(\Gamma A_1))^2 + \mathrm{Tr}(\Gamma A_1 \Gamma A_1), \\ 
a_r^2 =\mathbb{E}[J_r^2] = (\mathrm{Tr}(\Gamma A_2))^2 + \mathrm{Tr}(\Gamma A_2 \Gamma A_2).
\end{eqnarray*}
In Fig. \ref{Fig5}, the logarithms of $a_0$ and $a_r$ are 
plotted, motivating the changes of variable explained above.

\subsection{Proof of the monotonicity of $l_n$}
\label{AppendixD}
Define the sequence $(x_n)_{n\ge 0}$ and the function $\phi$ respectively by
\begin{eqnarray*}
x_n=\frac{n(n+1)}{(2n+1)^2}
\textrm{ and } 
\phi(n)=n(n+1).
\end{eqnarray*}
One then remarks that $\phi(n)$ is strictly increasing and that:
\begin{eqnarray*}
l_n = \frac{n(n+1)(n^2+n+1)}{(2n^2+2n+1)^2} = x_{\phi(n)}.
\end{eqnarray*}
So to prove that $l_n$ is increasing it is sufficient to prove that
$x_n$ is increasing, thus we compute
\begin{eqnarray*}
x_{n+1}-x_n &= \frac{(n+1)(n+2)}{(2n+3)^2} - \frac{n(n+1)}{(2n+1)^2}= (n+1) \left( \frac{n+2}{(2n+3)^2} - \frac{n}{(2n+1)^2}\right)\\
&= (n+1) \left( \frac{(n+2)(2n+1)^2 - n(2n+3)^2}{(2n+3)^2(2n+1)^2}\right)=\frac{2(n+1)}{(2n+3)^2(2n+1)^2} >0,
\end{eqnarray*}
which ends the proof.

\subsection{Proof of Proposition \ref{limit_pair_function}}
\label{AppendixE}
In order to compute \[ \lim_{r\to 0} g_n^{[2]}(r), \]  we need to compute the asymptotics, when $r$ is small, of: 
\begin{eqnarray*}
\det \Sigma_X(r) \textrm{ and } 
\mathbb{E}[\left| J_0 J_r \right|].
\end{eqnarray*}
Let us consider $n\ge0$ and put $x=r^2$.
We have the following Taylor expansions: 
\begin{eqnarray*}
L^{[0]} = L_n(x) = 1-nx+\frac{n(n-1)}{4}x^2 + O(x^4), \
e^x = 1+x+\frac{x^2}{2} + O(x^3), \\
\end{eqnarray*}
and thus
\begin{eqnarray}
\label{eq:detX}
\Delta = \det \Sigma_X(r)=e^x - (L^{[0]})^2 = (2n+1)x+\frac{1+n-3n^2}{2}x^2 + O(x^3) .
\end{eqnarray}

Then, to compute $\mathbb{E}[\left| J_0 J_r \right|]$ when 
$r$ is small we first compute first order in $x$ of $\Gamma$. 
For that purpose we first compute 
\begin{eqnarray*}
L^{[1]} = L_n'(x) = -n+\frac{n(n-1)}{2}x + O(x^2), \
L^{[2]} = L_n''(x) = \frac{n(n-1)}{2} + O(x), \\
B^{[0]} = L^{[0]}-L^{[1]} = n+1 - \frac{n(n+1)}{2}x +O(x^2), \\
B^{[1]} = L^{[1]}-L^{[2]} = -\frac{n(n+1)}{2} + O(x). \\
\end{eqnarray*}
Recalling that 
\begin{eqnarray*}
\Gamma &= \sum\nolimits_Y - (\sum\nolimits_0)^H (\sum\nolimits_X)^{-1} \sum\nolimits_0,\\
\end{eqnarray*}
we denote the columns of $\Sigma_0$ defined in \eqref{def:Sigma_0} 
by $c_1, c_2, c_3, c_4$, namely
\begin{eqnarray*}
c_1=
\begin{pmatrix}
0\\
rB^{[0]}
\end{pmatrix}, \ 
c_2=
\begin{pmatrix}
0\\
-rL^{[1]}
\end{pmatrix}, \
c_3=
\begin{pmatrix}
rL^{[1]}\\
r e^{r^2}
\end{pmatrix},
c_4=
\begin{pmatrix}
r L^{[1]}\\
0
\end{pmatrix},
\end{eqnarray*}
and $\Sigma_X^{-1}$ reads: 
$$
\Sigma_X^{-1} = \frac{1}{\Delta} 
\begin{pmatrix}
e^x &-L^{[0]}\\
-L^{[0]}& 1
\end{pmatrix}.
$$
Considering the definition \eqref{def:Sigma_Y} of 
$\Sigma_Y$, we compute the expansion of $\Gamma_{11}$:
\begin{eqnarray*}
\Gamma_{11}=(\Sigma_Y)_{11} - c_1^T \Sigma_X^{-1} c_1 
           = n+1 - \frac{x (B^{[0]})^2}{\Delta}. 
\end{eqnarray*}
Using the expression of $\Delta$ introduced above and since
\begin{eqnarray*}
x(B^{[0]})^2 = (n+1)^2x-n(n+1)^2x^2 + O(x^3),
\end{eqnarray*}
we get, after some simple computations, that 
\begin{eqnarray*}
c_1^T \Gamma_X^{-1} c_1 
&=\frac{(n+1)^2}{(2n+1)} - \frac{(n+1)^2(n^2+3n+1)}{2(2n+1)^2}x + O(x^2).
\end{eqnarray*}
Therefore:
\begin{eqnarray*}
\Gamma_{11} = \frac{n(n+1)}{(2n+1)} + \frac{(n+1)^2(n^2+3n+1)}{2(2n+1)^2}r^2 + O(r^4).
\end{eqnarray*}
Similarly, we may write: 
\begin{eqnarray*}
\Gamma_{12}&=&(\Sigma_Y)_{12} - c_1^T \Sigma_X^{-1} c_2 = \frac{x B^{[0]} L^{[1]}}{\Delta} = -\frac{n(n+1)}{(2n+1)} + \frac{n^2(n+1)^2}{2(2n+1)^2}r^2 + O(r^4)\\
\Gamma_{22}&=&(\Sigma_Y)_{22} - c_2^T \Sigma_X^{-1} c_2 
= n - \frac{x (L^{[1]})^2}{\Delta} = \frac{n(n+1)}{(2n+1)} + \frac{n^2(n^2-n-1)}{2(2n+1)^2}r^2 + O(r^4)\\
\Gamma_{13}&=&(\Sigma_Y)_{13} - c_1^T \Sigma_X^{-1} c_3 = 
n+1-n(n+1)x+O(x^2) - \frac{x B^{[0]} (e^x-L^{[1]}L^{[0]})}{\Delta} \\
 &= &\frac{n(n+1)}{(2n+1)} - \frac{(n+1)^2(3n^2+3n+1)}{2(2n+1)^2}r^2 + O(r^4)\\
\Gamma_{23}&=&(\Sigma_Y)_{23} - c_2^T \Sigma_X^{-1} c_3 =-\frac{n(n+1)}{2}x+O(x^2) +\frac{xL^{[1]}(e^x - L^{[0]}L^{[1]})}{\Delta} \\
 &=& -\frac{n(n+1)}{(2n+1)} + \frac{n(n+1)(n^2-n-1)}{2(2n+1)^2}r^2 + O(r^4)\\
\Gamma_{33}&=&(\Sigma_Y)_{33} - c_3^T \Sigma_X^{-1} c_3 \\
&=&n+1+(n+2)x+O(x^2) - \frac{x (e^x(L^{[1]})^2 - 2 L^{[0]} L^{[1]} e^x +e^{2x})}{\Delta} \\
\Gamma_{14}&=&(\Sigma_Y)_{14} - c_1^T \Sigma_X^{-1} c_4 
= -\frac{n(n+1)}{2}x+O(x^2)+\frac{x B^{[0]} L^{[1]}L^{[0]}}{\Delta} \\
&=& -\frac{n(n+1)}{(2n+1)} + \frac{n(n+1)(n^2-n-1)}{2(2n+1)^2}r^2 + O(r^4)\\
\Gamma_{24}&=&(\Sigma_Y)_{24} - c_2^T \Sigma_X^{-1} c_4 = 
n-n(n-1)x+O(x^2) + \frac{x(L^{[1]})^2L^{[0]}}{\Delta} \\
 &=& \frac{n(n+1)}{(2n+1)} - \frac{n(3n+2)(n^2-n-1)}{2(2n+1)^2}r^2 + O(r^4)\\
 \Gamma_{34}&=&(\Sigma_Y)_{34} - c_3^T \Sigma_X^{-1} c_4 
=- \frac{x e^x L^{[1]} (L^{[1]}-L^{[0]})}{\Delta} \\
&=& -\frac{n(n+1)}{(2n+1)} + \frac{n(n+1)(n^2-3n-2)}{2(2n+1)^2}r^2 + O(r^4)\\
\Gamma_{44}&=&(\Sigma_Y)_{44} - c_4^T \Sigma_X^{-1} c_4 = 
 n+nx+O(x^2) -\frac{x e^x (L^{[1]})^2}{\Delta} \\
    &=& \frac{n(n+1)}{(2n+1)} + \frac{n(n^3+3n^2+5n+2)}{2(2n+1)^2}r^2 + O(r^4)
\end{eqnarray*}

Now let introduce the quantities 
\begin{eqnarray*}
D_0=z_1-z_2, \
S_0=z_1+z_2, \
D_1=z_3-z_4,\
S_1=z_3+z_4.
\end{eqnarray*}
One first remark that 
\begin{eqnarray*}
J_0 = \Re(\overline{D_0}S_0), \ 
J_r = \Re(\overline{D_1}S_1)
\end{eqnarray*}
and thus $\mathbb{E} [ |J_0 J_r|] = 
\mathbb{E} [|\Re(\overline{D_0}S_0) \Re(\overline{D_1}S_1)|] $.
Let us first study $D_0$ and $D_1$, for which we have: 
\begin{eqnarray*}
{\mathbb E}[|D_0|^2] = \Gamma_{11} + \Gamma_{22}-2\Gamma_{12} = \frac{4n(n+1)}{2n+1} +O(r^2),\\
{\mathbb E}[|D_1|^2] = \Gamma_{33} + \Gamma_{44}-2\Gamma_{34}  = 
\frac{4n(n+1)}{2n+1} +O(r^2),\\
{\mathbb E}[D_0 \overline{D_1}] = \Gamma_{13} + \Gamma_{24} - \Gamma_{14}-\Gamma_{23}=\frac{4n(n+1)}{2n+1} +O(r^2). 
\end{eqnarray*}
From this we deduce that 
$$
{\mathbb E}[|D_1-D_0|^2] \underset{r \rightarrow 0}{\longrightarrow}  0.
$$
Furthermore as $Z$ is a Gaussian vector that converges in distribution to ${\cal C} {\cal N} (0,\Gamma_0)$, where  $\Gamma_0$ is the limit of $\Gamma$ when $r$ tends to $0$. 
$D_0$ converges in distribution to ${\cal C} {\cal N} (0,A^T\Gamma_0A)$, with $A = (1,-1,0,0)^T$. 
Similarly $D_1$ converges in distribution to ${\cal C} {\cal N} (0,B^T\Gamma_0B)$, with  $B = (0,0,1,-1)^T$.
Now, we can prove that $D_0$ and $D_1$ converge to the same distribution. Indeed, let us recall for consistency:
\begin{theorem}[Slutsky's theorem, \cite{casella2024statistical} p.239] 
Let $X_n$ and $Y_n$ be sequences of random elements. If $X_n$ converges in distribution to a random variable $X$ and $Y_n$ converges in probability to a constant $c$, then:
\begin{eqnarray*}
X_n + Y_n \overset{d}{\longrightarrow} X+c.
\end{eqnarray*}
\end{theorem}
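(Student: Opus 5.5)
We prove the statement with characteristic functions and Lévy's continuity theorem, working in $\R^d$ (or $\C^d\cong\R^{2d}$, which covers the use made here for $D_0,D_1$). For $t\in\R^d$, write $\varphi_W(t)=\mathbb{E}[e^{i\langle t,W\rangle}]$. By Lévy's continuity theorem it suffices to show that $\varphi_{X_n+Y_n}(t)\to\varphi_{X+c}(t)=e^{i\langle t,c\rangle}\varphi_X(t)$ for every fixed $t$.

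First we split the difference into two pieces:
\begin{eqnarray*}
\bigl|\varphi_{X_n+Y_n}(t)-e^{i\langle t,c\rangle}\varphi_X(t)\bigr|
\le \bigl|\mathbb{E}\bigl[e^{i\langle t,X_n\rangle}\bigl(e^{i\langle t,Y_n\rangle}-e^{i\langle t,c\rangle}\bigr)\bigr]\bigr|
+\bigl|\varphi_{X_n}(t)-\varphi_X(t)\bigr|.
\end{eqnarray*}
The second term tends to $0$ because $X_n\overset{d}{\longrightarrow}X$; this is the easy direction of Lévy's theorem, since $x\mapsto e^{i\langle t,x\rangle}$ is bounded and continuous.

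For the first term, $|e^{i\langle t,X_n\rangle}|=1$, so it is bounded by $\mathbb{E}\bigl|e^{i\langle t,Y_n-c\rangle}-1\bigr|$. Fix $\delta>0$ and split according to whether $|Y_n-c|\le\delta$. On that event, $|e^{i\theta}-1|\le|\theta|$ gives the bound $|t|\delta$. On its complement, the integrand is at most $2$. Hence
\begin{eqnarray*}
\mathbb{E}\bigl|e^{i\langle t,Y_n-c\rangle}-1\bigr|\le |t|\,\delta+2\,\mathbb{P}\bigl(|Y_n-c|>\delta\bigr).
\end{eqnarray*}
Since $Y_n\to c$ in probability, the $\limsup$ of the right-hand side is at most $|t|\delta$. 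Letting $\delta\to0$ shows that the first term vanishes, and combining the two estimates yields the claim.

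The only step needing care is this uniform control of the first term. Its key point is that the bound does not depend on the law of $X_n$, because $|e^{i\langle t,X_n\rangle}|=1$; no joint convergence of $(X_n,Y_n)$ is required. The continuity theorem itself is taken as known.
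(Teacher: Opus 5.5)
Your proof is correct. The splitting of $\varphi_{X_n+Y_n}(t)-e^{i\langle t,c\rangle}\varphi_X(t)$ is right, and so is the bound $\mathbb{E}\bigl|e^{i\langle t,Y_n-c\rangle}-1\bigr|\le |t|\,\delta+2\,\mathbb{P}(|Y_n-c|>\delta)$. The appeal to L\'evy's continuity theorem is also valid, since you use it only where the limit is already known to be the characteristic function of $X+c$.

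The paper gives no proof of this statement. It only recalls it, citing Casella--Berger, as a tool in the proof of Proposition~\ref{limit_pair_function}, so there is no paper argument to compare with.

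Compared with the classical scalar proof, your route has a concrete advantage. The scalar proof sandwiches $\mathbb{P}(X_n+Y_n\le x)$ between $\mathbb{P}(X_n\le x-c-\varepsilon)-\mathbb{P}(|Y_n-c|>\varepsilon)$ and $\mathbb{P}(X_n\le x-c+\varepsilon)+\mathbb{P}(|Y_n-c|>\varepsilon)$ at continuity points, and so it works only for real variables. Your argument works directly in $\R^d$, which is what the paper needs. There the theorem is applied to the complex pairs $(D_0,\,D_1-D_0)$ and $(-\tilde S_0,\,\tilde S_1+\tilde S_0)$, indexed by the continuous parameter $r\to0$. Your estimates carry over verbatim to such an index, or can be applied along any sequence $r_k\to0$.

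You could also remove L\'evy's theorem entirely. Replace $e^{i\langle t,\cdot\rangle}$ by an arbitrary bounded Lipschitz function $h$; the same two-line estimate gives $\bigl|\mathbb{E}h(X_n+Y_n)-\mathbb{E}h(X_n+c)\bigr|\le \mathrm{Lip}(h)\,\delta+2\|h\|_\infty\,\mathbb{P}(|Y_n-c|>\delta)$, and the portmanteau theorem concludes. This variant also proves the statement for random elements of any separable normed space, which is its literal generality. By contrast, the continuity theorem is not available in infinite dimensions without an extra tightness argument.
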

In our case, assume $D_0 \overset{d}{\rightarrow} D$ and 
$D_1 \overset{d}{\rightarrow} \tilde D$ 
then we also have  
$$
D_1 = D_0 + D_1-D_0  \overset{d}{\longrightarrow} D,
$$
since $D_0 \overset{d}{\longrightarrow} D$, $D_1-D_0\overset{L^2}{\longrightarrow} 0$ and thus also in probability, and finally applying Slutsky's theorem. Let us call 
$D$ the common limit of $D_0$ and $D_1$. 

Now let us consider the variables $S_0$ and $S_1$, 
for which we have:  
\begin{eqnarray*}
{\mathbb E}[|S_0|^2] = \Gamma_{11} + \Gamma_{22}+2\Gamma_{12}=\frac{n^2+n+1}{2} r^2 +O(r^4), \\
{\mathbb E}[|S_1|^2] = \Gamma_{33} + \Gamma_{44}+2\Gamma_{34} 
= \frac{n^2+n+1}{2} r^2 +O(r^4),\\
{\mathbb E}[S_0 \overline{S_1}] = \Gamma_{13} + \Gamma_{14}+\Gamma_{23}+\Gamma_{24}=-\frac{n^2+n+1}{2} r^2 + O(r^4). \end{eqnarray*}

Now let us define the variables $\tilde{S_0}=\frac{S_0}{r}$ and $\tilde{S_1}=\frac{S_1}{r}$, which are Gaussian variable 
converging in distribution to $\tilde S$ and $\tilde{\tilde S}$ respectively. From the equalities just above we have
\begin{eqnarray*}
{\mathbb E}[|\tilde{S_0} + \tilde{S_1}|^2] = {\mathbb E}[|\tilde{S_0}|^2] + {\mathbb E}[|\tilde{S_1}|^2]+ 2 \Re {\mathbb E} [\tilde{S_0} \overline{\tilde{S_1}}] \underset{r\rightarrow 0}{\longrightarrow} 0.
\end{eqnarray*}
Using the same reasoning before, we write: 
$$
\tilde{S_1} = -\tilde{S_0} + \tilde{S_1}+ \tilde{S_0} 
\overset{d}{\longrightarrow} - \tilde S.
$$
Now let us compute
\begin{eqnarray*}
{\mathbb E}[D_0 \overline{S_0}] =  \Gamma_{11} - \Gamma_{22}=O(r^2), \ 
{\mathbb E}[D_r \overline{S_r}] =  \Gamma_{33} - \Gamma_{44}=O(r^2),
\end{eqnarray*}
which means that the variables $D_0$ and $\tilde{S_0}$ are asymptotically independent. Similarly, $D_1$ and $\tilde{S_1}$ are asymptotically independent. Therefore we can write
\begin{eqnarray*}
\frac{J_0}{r} \overset{d}{\longrightarrow} \Re(\overline{D}\tilde{S}), \
\frac{J_r}{r} \overset{d}{\longrightarrow} -\Re(\overline{D}\tilde{S}),
\end{eqnarray*}
therefore
\begin{eqnarray*}
\frac{|J_0 J_r|}{r^2} \overset{d}{\longrightarrow}
\left[\Re (\overline{D}\tilde{S}) \right]^2.
\end{eqnarray*}

Putting $D = D_x + i D_y$ and $\tilde{S} = \tilde S_x+i 
\tilde S_y$
then
\begin{eqnarray*}
\left [\Re (\overline{D}\tilde{S}) \right]^2 = 
D_x^2\tilde S_x^2+D_y^2\tilde S_y^2-2D_xD_y\tilde S_x \tilde S_y.
\end{eqnarray*}
Since the variables $D$ and $\tilde{S}$ are circular symmetric Gaussian variables, then:
\begin{eqnarray*}
{\mathbb E}[D_xD_y] = {\mathbb E}[\tilde S_x \tilde S_y] = 0,
\end{eqnarray*}
and 
\begin{eqnarray*}
{\mathbb E}[D_x^2] = {\mathbb E}[D_y^2] = \frac12  {\mathbb E}[|D|^2], \ 
{\mathbb E}[\tilde S_x^2] = {\mathbb E}[\tilde S_y^2] = \frac12  {\mathbb E}[|\tilde{S}|^2].
\end{eqnarray*}
Using Isserlis's theorem, and asymptotic independence, we get 
\begin{eqnarray*}
{\mathbb E}[D_x^2\tilde S_x^2] &=& {\mathbb E}[D_x^2]{\mathbb E}[\tilde S_x^2],\\ 
{\mathbb E}[D_y^2\tilde S_y^2] &=& {\mathbb E}[D_y^2]{\mathbb E}[\tilde S_y^2],\\
{\mathbb E}[D_x D_y \tilde S_x \tilde S_y ] &=& 0.
\end{eqnarray*}
Combining all these results, one obtains :
\begin{eqnarray*}
\frac{\mathbb{E}[\left| J_0 J_r \right|]}{r^2} \underset{r\rightarrow 0}{\longrightarrow}
\frac{1}{2} {\mathbb E}[|D|^2]{\mathbb E}[|S|^2]= 
\frac{n(n+1)(n^2+n+1)}{(2n+1)},
\end{eqnarray*}
and using (\ref{eq:detX}), 
\begin{eqnarray*}
\frac{\det \Sigma_X}{r^2}\underset{r\rightarrow 0}{\longrightarrow} (2n+1),
\end{eqnarray*}
one gets that
\begin{eqnarray*}
\rho_n^{[2]}(r)=\frac{\mathbb{E}[\left| J_0 J_r \right|]}{\pi^2 \det \Sigma_X} \underset{r\rightarrow 0}{\longrightarrow} \frac{n(n+1)(n^2+n+1)}{(2n+1)^2 \pi^2}.
\end{eqnarray*}
Also we know that (\ref{rho1}):
\begin{eqnarray*}
\rho_n^{[1]}=\frac{1}{\pi} (n+1/2+\frac{1}{4n+2})
=\frac{1}{\pi} \ \frac{4n^2+4n+2}{2(2n+1)}= \frac{1}{\pi} \ \frac{2n^2+2n+1}{2n+1}.
\end{eqnarray*}
Thus 
\begin{eqnarray*}
g_n^{[2]}(r)= \frac{\rho_n^{[2]}(r)}{\rho_n^{[1]}\rho_n^{[1]}} \underset{r\rightarrow 0}{\longrightarrow} \frac{n(n+1)(n^2+n+1)}{(2n^2+2n+1)^2},
\end{eqnarray*}
which concludes the proof.

\subsection{Proof of Proposition \ref{signed_rho2}}
\label{AppendixF}
Let us first start computing the covariance matrix of $Z$. 
The kernel we consider here is 
$\tilde K^n (z,w) = L_n(|z-w|^2) e^{z\bar w}$.
Putting $L_n^{(i)} = L_n^{(i)}(r^2)$, $i \geq 0$, 
we get: 
\[
\Sigma_X = \begin{pmatrix} 
\tilde K^n(0,0) & \tilde K^n(0,r) \\ 
\tilde K^n(r,0) & \tilde K^n(r,r) 
\end{pmatrix} = \begin{pmatrix} 
1 & L_n \\ 
L_n & e^{r^2}
\end{pmatrix},
\]
and thus 
\begin{eqnarray}
\label{def:Sigma_invX}
\Sigma_X^{-1} = \frac{1}{e^{r^2} - L_n^2} 
\begin{pmatrix} 
e^{r^2}& -L_n \\ 
-L_n &  1
\end{pmatrix}.
\end{eqnarray}
We then have 
\begin{eqnarray*}
\partial_{w} \tilde K^n(z,w) &=& {\mathbb E} [f_n(z) \overline{\partial_{\bar w} f_n(w)}].
\end{eqnarray*}
Therefore
\begin{eqnarray}
\label{def:Sigma_0}
\begin{aligned}
\sum\nolimits_0 = 
\begin{pmatrix}
\partial_{\bar w} \tilde K^n(0,0)&
\partial_{w} \tilde K^n(0,0)
&\partial_{\bar w} \tilde K^n(0,r)
&\partial_{w} \tilde K^n(0,r)\\
\partial_{\bar w} \tilde K^n(r,0)&\partial_{w} \tilde K^n(r,0)
&\partial_{\bar w} \tilde K^n(r,r)&\partial_{w} \tilde K^n(r,r)
\end{pmatrix}\\
=
\begin{pmatrix} 
0 & 0 & r L_n' & r L_n' \\ 
r B^{[0]} & -r L_n' & re^{r^2} & 0
\end{pmatrix},
\end{aligned}
\end{eqnarray}
in which $B^{[0]} = L_n(r^2) - L_n'(r^2)$.
Finally for $\Sigma_Y = {\mathbb E}[Y Y^H]$, we first write that: 
\begin{eqnarray*}
\partial^2_{z,w} \tilde K^n(z,w) &=& {\mathbb E} [\partial_z f_n(z) \partial_w \overline{f_n(w)}],
\end{eqnarray*}
and then, putting $B^{[i]} = L_n^{(i)} -L_n^{(i+1)}$ and remarking that $L_n (0) = 1$, $L_n'(0) = -n$, 
we get:
\begin{eqnarray}
\label{def:Sigma_Y}
\begin{aligned}
\Sigma_Y = 
\begin{pmatrix} 
\partial_{z,\bar w}^2 \tilde K^n(0,0)&
\partial_{z,w}^2 \tilde K^n(0,0)&\partial_{z,\bar w}^2 K^n(0,r)
&\partial_{z,w}^2 \tilde K^n(0,r)\\
\partial_{\bar z,\bar w}^2 \tilde K^n(0,0)&
\partial_{\bar z,w}^2 \tilde K^n(0,0)&\partial_{\bar z,\bar w}^2 \tilde K^n(0,r)
&\partial_{\bar z,w}^2 \tilde K^n(0,r)\\
\partial_{z,\bar w}^2 \tilde K^n(r,0)&\partial_{z,w }^2 \tilde K^n(r,0)
&\partial_{z,\bar w}^2 \tilde K^n(r,r)&\partial_{z,w}^2 \tilde K^n(r,r)\\
\partial_{\bar z,\bar w}^2 
\tilde K^n(r,0)&\partial_{\bar z,w}^2 \tilde K^n(r,0)&\partial_{\bar z,\bar w}^2 \tilde K^n(r,r)&
\partial_{\bar z,w}^2 \tilde K^n(r,r)
\end{pmatrix} =\\
 \begin{pmatrix} 
n+1 & 0 & r^2 B^{[1]} +B^{[0]} & r^2 B^{[1]} \\ 
0 & n & -r^2 L_n'' & -r^2 L_n''- L_n' \\ 
 r^2 B^{[1]} +B^{[0]} & - r^2 L_n'' & (n+1+r^2) e^{r^2} & 0 \\ 
r^2 B^{[1]} & - r^2 L_n''-L_n'& 0 & n e^{r^2}
\end{pmatrix}.
\end{aligned}
\end{eqnarray}

Then the variable $Z$ is zero mean Gaussian 
vector with covariance matrix 
\begin{eqnarray*}
\Gamma &= \sum\nolimits_Y - (\sum\nolimits_0)^H (\sum\nolimits_X)^{-1} \sum\nolimits_0,\\
\end{eqnarray*}
and then
$$
J_{f_n(z)}=|\partial_z f_n(z)|^2 - 
|\partial_{\bar z} f_n(z)|^2 := \omega(Z),
$$
which we rewrite as 
$$
\omega(Z) = (|z_1|^2-|z_2|^2)(|z_3|^2-|z_4|^2),
$$
putting $Z=(z_1,\dotsc,z_4)$. Then developing 
$\varphi(Z)$, we get: 
\begin{eqnarray*}
{\mathbb E}[\omega(Z)] 
= {\mathbb E}[|z_1|^2 |z_3|^2] - 
{\mathbb E}[|z_1|^2 |z_4|^2] - {\mathbb E}[|z_2|^2 |z_3|^2] + 
{\mathbb E}[|z_2|^2 |z_4|^2].
\end{eqnarray*}
As $Z$ is a Gaussian vector we have \(z_i \sim e^{i\theta} z_i\), and thus 
\begin{eqnarray*}
{\mathbb E}[z_i^2]= {\mathbb E}[e^{2i\theta} z_i^2]= e^{2i\theta}{\mathbb E}[z_i^2] = 0, \ 
{\mathbb E}[z_i z_j] = {\mathbb E}[e^{2i\theta} z_i z_j]= e^{2i\theta}{\mathbb E}[z_i z_j] = 0.
\end{eqnarray*}
Then using the Isserlis' theorem \cite{isserlis1918formula}:
\begin{eqnarray*}
{\mathbb E}[|z_i|^2 |z_j|^2] = 
{\mathbb E}[z_i \bar{z}_i z_j \bar{z}_j] 
= {\mathbb E}[z_i \bar{z}_i]{\mathbb E}[z_j \bar{z}_j]+ 
{\mathbb E}[z_i z_j]{\mathbb E}[\bar z_i \bar{z}_j]+ 
{\mathbb E}[z_i \bar z_j]{\mathbb E}[\bar z_i z_j]\\
= {\mathbb E}[|z_i|^2] {\mathbb E}[|z_j|^2] + 
|{\mathbb E}[z_i z_j]|^2 +
|{\mathbb E}[z_i \bar{z}_j]|^2
= {\mathbb E}[|z_i|^2] {\mathbb E}[|z_j|^2] + |{\mathbb E}[z_i \bar{z}_j]|^2
\end{eqnarray*}

Finally, denoting by  $\Gamma = \mathrm{Cov}(Z)$, and 
$\Omega_{ij} = {\mathbb E} [z_i z_j]$, one obtains:
\begin{eqnarray*}
{\mathbb E}[|z_1|^2 |z_3|^2] = \Gamma_{11}\Gamma_{33} + |\Gamma_{13}|^2, &
{\mathbb E}[|z_1|^2 |z_4|^2] = \Gamma_{11}\Gamma_{44} + |\Gamma_{14}|^2,\\
{\mathbb E}[|z_2|^2 |z_3|^2] = \Gamma_{22}\Gamma_{33} + |\Gamma_{23}|^2, &
{\mathbb E}[|z_2|^2 |z_4|^2] = \Gamma_{22}\Gamma_{44} + |\Gamma_{24}|^2.
\end{eqnarray*}
Then reinjecting these expressions in the conditional expectation we obtain:
\begin{eqnarray*}
{\mathbb E}[\omega(Z)] =
(\Gamma_{11} - \Gamma_{22})(\Gamma_{33} - \Gamma_{44}) 
+|\Gamma_{13}|^2 - |\Gamma_{14}|^2-|\Gamma_{23}|^2 + |\Gamma_{24}|^2.
\end{eqnarray*}

\subsection{Proof of Proposition \ref{hybrid_rho2}}
\label{AppendixG}
Using (\ref{integral_J0}), and multiplying it by $J_r$:
\begin{eqnarray*}
\left| J_0 \right| J_r = \frac{2}{\pi}\int_0^{\infty} \frac{J_r-J_r \ \cos(s J_0)}{s^2} \ ds,
\end{eqnarray*}

then 
\begin{eqnarray*}
\mathbb{E}[\left| J_0 \right| J_r] = \frac{2}{\pi}\int_0^{\infty} \frac{\mathbb{E}[J_r]-\mathbb{E}[J_r \ \cos(s J_0)]}{s^2}  \ ds.
\end{eqnarray*}
For $\mathbb{E}[J_r]$, we have:
\begin{eqnarray*}
\mathbb{E}[J_r] = \mathbb{E}[Z^* A_2 Z] = \sum_{i,j} (A_2)_{i,j}\Gamma_{i,j} = \mathrm{Tr}(\Gamma A_2),
\end{eqnarray*}
and in order to compute $\mathbb{E}[J_r \ \cos(s J_0)]$, we use the function $(s,t) \rightarrow \varphi(s,t)$ defined in (\ref{eq:def_phi}), indeed:
\begin{eqnarray*}
\partial_t \varphi(s,0) = \mathbb{E}[iJ_re^{isJ_0}],
\end{eqnarray*}
therefore 
\begin{eqnarray*}
\mathbb{E}[J_r \cos(sJ_0)] = \Im(\partial_t \varphi (s,0)).
\end{eqnarray*}

Consider $M = I-i\ \Gamma \ (s A_1 + t A_2) $ and using Jacobi's formula:
\begin{eqnarray*}
\partial_t \varphi(s,t) &=& \frac{- \det(M) \ \mathrm{Tr}( M^{-1} \ (-i) \ \Gamma A_2 )}{\det(M)^2}\\
&=& i \ \frac{  \mathrm{Tr}( M^{-1} \ \Gamma A_2 )}{\det(M)},
\end{eqnarray*}
and then one obtains:
\begin{eqnarray*}
\Im(\partial_t \varphi (s,0)) = \Re \left( \frac{ \mathrm{Tr}( M_{t=0}^{-1} \ \Gamma A_2 )}{\det(M_{t=0})}   \right),
\end{eqnarray*}
which concludes the proof.

\section*{Acknowledgement}

This research was supported by the French ANR TIPHAINE (ANR-25-CE48-3360).

\bibliographystyle{IEEEtran}
\bibliography{references.bib}

\end{document}